\newtheorem{theorem}{Theorem}
\newtheorem{lemma}{Lemma}
\newtheorem{remark}{Remark}
\numberwithin{equation}{section}
\begin{document}

\pagestyle{fancyplain}
\lhead[\thepage]{}
\chead[Thomas Kecker]{ \fancyplain{Accepted for publication in the Journal d'Analyse Math\'ematique}{Polynomial Hamiltonian Systems}}
\rhead[]{\fancyplain{}{\thepage}}
\cfoot{\fancyplain{\thepage}{}}

\title{Polynomial Hamiltonian Systems with Movable Algebraic Singularities}

\author{Thomas Kecker}
\date{}
\maketitle

\begin{abstract}
\noindent The singularity structure of solutions of a class of Hamiltonian systems of ordinary differential equations in two dependent variables is studied. It is shown that for any solution, all movable singularities, obtained by analytic continuation along a rectifiable curve, are at most algebraic branch points.
\end{abstract}

\section{Introduction}
\noindent Singularities of solutions of ordinary differential equations can be classed as being either fixed or movable. The set of fixed singularities consists of the points in the complex plane where the equation itself becomes singular in some sense. All other singularities of a solution are called movable as their positions vary with the initial conditions of the equation. 

In the 1900's P. Painlev\'e \cite{painleve1} classified all rational second-order equations
\begin{equation}
\label{secondorder}
y'' = R(z,y,y'),
\end{equation} 
for which all solutions are single-valued around their movable singularities, a property known as the Painlev\'e property. This classification, with some errors and gaps which have been filled by R. Fuchs, B. Gambier and others, led to the discovery of six non-linear equations now known as the six Painlev\'e equations. The result of the classification is that the solution of any equation of the form (\ref{secondorder}) that has the Painlev\'e property can be expressed by the solutions of classically known function (e.g. elliptic functions, the solutions of linear differential equations or equations that are solvable by quadrature), and the solutions of the six non-linear Painlev\'e equations. Interestingly, to each Painlev\'e equation is associated an equivalent Hamiltonian system
\begin{equation}
\label{psystem}
\begin{aligned}
\frac{dq}{dz} =& \frac{\partial H_J}{\partial p} \\
\frac{dp}{dz} =& -\frac{\partial H_J}{\partial q},
\end{aligned}
\end{equation}
with Hamiltonians $H_J(z,p,q)$, $J=1,\dots,6$, polynomial in $p$ and $q$. These were already noticed by J. Malmquist \cite{malmquist2} and later extensively studied by K. Okamoto \cite{okamoto1,okamoto2,okamoto3,okamoto4}.

A classification of systems of equations in two variables that possess the Painlev\'e property,
\begin{equation}
\label{generalsystem}
\begin{aligned}
y_1' =& P(z,y_1,y_2) \\
y_2' =& Q(z,y_1,y_2),
\end{aligned}
\end{equation}
was given by R. Garnier \cite{garnier1} in the autonomous case where $P=P(y_1,y_2)$ and $Q=Q(y_1,y_2)$ are homogeneous rational functions of $y_1$ and $y_2$. The solutions in this case are given by elliptic functions or a combination of rational and exponential functions.  J. Goffar-Lombet \cite{goffar1} classified those systems (\ref{generalsystem}) with the Painlev\'e property where $P$ and $Q$ are certain polynomials of degree at most $3$ in $y_1$ and $y_2$ with $z$-dependent analytic coefficients and showed that the solutions can be given in terms of classically known functions and Painlev\'e transcendents. T. Kimura and T. Matuda \cite{kimura1} extended this result to the case where the degrees of $P$ and $Q$ are less or equal to 5. They conjecture that any system (\ref{generalsystem}) with the Painlev\'e property is equivalent to one of the systems (\ref{psystem}). 

Lifting the restriction of the Painlev\'e property, a number of articles have studied classes of second-order differential equations for which all movable singularities are at most algebraic branch points. In \cite{shimomura2,shimomura3}, S. Shimomura considered the equations
\begin{equation*}
\begin{aligned}
y'' =& \frac{2(2k+1)}{(2k-1)^2} y^{2k} + z, \quad k \in \mathbf{N}, \\
y'' =& \frac{k+1}{k^2}y^{2k+1} + zy + \alpha, \quad k \in \mathbf{N} \setminus \{2\},
\end{aligned}
\end{equation*}
of $P_I$-type and $P_{II}$-type, respectively, for which he showed that all movable singularities that can be reached by analytic continuation along a rectifiable curve are algebraic branch points. More generally, G. Filipuk and R. Halburd \cite{halburd1} have studied equations of the form
\begin{equation}
\label{filipukhalburd}
y'' = P(z,y),
\end{equation}
where $P$ is a polynomial in $y$ with analytic coefficients in $z$ that satisfy one or two differential relations known as resonance conditions (these are equivalent to the existence of certain formal algebraic series solutions of (\ref{filipukhalburd})). Equation (\ref{filipukhalburd}) can be seen as a Hamiltonian system with Hamiltonian
\begin{equation*}
H(z,y_1,y_2) = \frac{1}{2} y_2^2 - \hat{P}(z,y_1),
\end{equation*}
where $\hat{P}$ is a polynomial with $\hat{P}_y = P$ and we have the correspondence $y=y_1$, $y'=y_2$. In this article we consider a much more general class of Hamiltonian systems in two variables. The way we prove that any movable singularity of a solution that can be reached by analytic continuation along a finite length curve is an algebraic branch point is based on a method used in certain proofs of the Painlev\'e property for the Painlev\'e equations. In particular we mention the proofs in \cite{hinkkanen1}, \cite{hukuhara1} and \cite{shimomura1}, see also the book \cite{gromak}.

\section{Hamiltonian systems in two variables}
\label{hamiltoniansystem}
\noindent Consider the Hamiltonian system given by
\begin{equation}
\label{Hamiltonian}
H(z,y_1,y_2) = \alpha_{M+1,0}(z) y_1^{M+1} + \alpha_{0,N+1}(z) y_2^{N+1} + \sum_{(i,j) \in I} \alpha_{ij}(z) y_1^i y_2^j,
\end{equation}
where the set of indices $I$ is defined by
\begin{equation}
\label{indexcondition}
I = \{ (i,j) \in \mathbf{N}^2 : i(N+1) + j(M+1) < (N+1)(M+1)\},
\end{equation} 
and $\alpha_{ij}(z)$, $(i,j) \in I \cup \{(M+1,0),(0,N+1)\}$, are analytic functions in some common domain $\Omega \subset \mathbf{C}$.
The Hamiltonian equations are given by
\begin{equation}
\label{hameqns}
\begin{aligned}
y_1' =& (N+1)\alpha_{0,N+1}(z) y_2^N+ \sum_{(i,j) \in I} j \alpha_{ij}(z) y_1^i y_2^{j-1} \\
y_2' =& -(M+1)\alpha_{M+1,0}(z) y_1^M - \sum_{(i,j) \in I} i \alpha_{ij}(z) y_1^{i-1} y_2^j.
\end{aligned}
\end{equation}
The set $I$ is chosen so that $y_2^N$ and $y_1^M$ will turn out to be the dominant terms on the right hand sides of the system (\ref{hameqns}) in the vicinity of any singularity $z_\infty$ for which $\alpha_{M+1,0}(z_\infty),\alpha_{0,N+1}(z_\infty) \neq 0$. We define the set $\Phi = \{ z_0 \in \Omega | \alpha_{M+1,0}(z_0) = 0 \} \cup \{ z_0 \in \Omega | \alpha_{0,N+1}(z_0) = 0 \}$. A singularity $z_\infty$ of some solution $(y_1(z),y_2(z))$ of the system (\ref{hameqns}) is called \textit{fixed} if $z_\infty \in \Phi$. A singularity $z_\infty \notin \Phi$ of a solution of (\ref{hameqns}) is called \textit{movable}. 
Intuitively, the position of a movable singularity changes when the initial conditions of the system of differential equations are varied, whereas the fixed singularities are determined by the equation itself. A more general definition of fixed and movable singularities for first-order systems of differential equations can be found in \cite{murata1}, see also \cite{kimura2} for the case of second-order differential equations.

To determine possible leading order behaviours of a solution $(y_1,y_2)$ of (\ref{hamsystem}) about a movable singularity $z_\infty$ suppose that
\begin{equation*}
y_1 \sim c_p (z-z_\infty)^p, \qquad y_2 \sim c_q (z-z_\infty)^q.
\end{equation*} 
Assuming that the leading order terms of the right hand side of (\ref{hameqns}) are $y_2^N$ and $y_1^M$, respectively, we must have
\begin{equation*}
p-1 = Nq, \quad q-1 = Mp \quad  \Longrightarrow \quad p = -\frac{N+1}{MN-1}, \quad q = -\frac{M+1}{MN-1}.
\end{equation*}
In order for all solutions to have movable algebraic branch points a necessary condition is the existence of certain formal algebraic series solutions of (\ref{hameqns}), 
\begin{equation}
\label{formalseries}
y_1(z) = \sum_{k=-N-1}^\infty c_{1,k} (z-z_0)^{\frac{k}{MN-1}}, \quad y_2(z) = \sum_{k=-M-1}^\infty c_{2,k} (z-z_0)^{\frac{k}{MN-1}},
\end{equation}
about any point $z_0 \in \Omega \setminus \Phi$. The main result of this article is that the existence of a certain number of such formal series solutions is also sufficient for every movable singularity of a solution of (\ref{hameqns}) to be of this form. This result should be compared to the fact that for an ODE, passing the Painlev\'e test is not equivalent to having the Painlev\'e property. The existence of the series solutions is equivalent to a number of differential relations between the coefficient functions $\alpha_{ij}(z)$, $(i,j) \in I$, of the Hamiltonian $H$, known as resonance conditions, which can be calculated algorithmically. They arise from the fact that if one inserts the series (\ref{formalseries}) into the system (\ref{hameqns}) and tries to recursively determine the coefficients $c_{1,k}, k=-N-1,-N,\dots$ and $c_{2,k}, k=-M-1,-M,\dots$, the recursion breaks down at certain stages known as resonances and one is left with identities that need to be satisfied identically, leaving one coefficient at the resonance arbitrary.

\begin{theorem}
\label{maintheo}
Suppose that at every point $z_0 \in \Omega \setminus \Phi$ the Hamiltonian system (\ref{hameqns}) admits formal series solutions of the form (\ref{formalseries}) for every pair of values $(c_{1,-N-1},c_{2,-M-1})$ satisfying 
\begin{equation*}
\begin{aligned}
c_{1,-N-1}^{MN-1} &= -\left( \alpha_{0,N+1}(z_0) \alpha_{M+1,0}(z_0)^N (MN-1)^{N+1}\right)^{-1}, \\ 
c_{2,-M-1} &= (MN-1) \alpha_{M+1,0}(z_0) c_{1,-N-1}^M.
\end{aligned}
\end{equation*}
Let $\gamma \subset \Omega$ be a finite length curve with endpoint $z_\infty \in \Omega \setminus \Phi$ such that a solution $(y_1,y_2)$ can be analytically continued along $\gamma$ up to, but not including $z_\infty$. Then the solution is represented by series (\ref{formalseries}) at $z_0 = z_\infty$,
\begin{equation}
\label{solutionexpansion}
\begin{aligned}
y_1(z) =& \sum_{k=-\frac{N+1}{d}}^\infty C_{1,k} (z-z_\infty)^{\frac{kd}{MN-1}}, \\
y_2(z) =& \sum_{k=-\frac{M+1}{d}}^\infty C_{2,k} (z-z_\infty)^{\frac{kd}{MN-1}},
\end{aligned}
\end{equation}
where $d = \gcd\{M+1,N+1,MN-1\}$, convergent in some punctured, branched, neighbourhood of $z_\infty$.
\end{theorem}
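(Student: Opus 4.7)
The plan is to adapt the technique used in the proofs of the Painlevé property for the classical Painlevé equations (see e.g.\ \cite{hinkkanen1,hukuhara1,shimomura1}) to the present Hamiltonian setting. The crux is to build an auxiliary function $E(z)$ from $H$ and correction terms so that $|E(z)|$ remains bounded as $z\to z_\infty$ along $\gamma$, which in turn forces $y_1$ and $y_2$ to grow no faster than the leading order predicted by the formal series (\ref{formalseries}). Once this a priori bound is in place, a scaling of variables transforms (\ref{hameqns}) into a regular system whose analytic solvability yields the convergent Puiseux expansion.

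I would first assign weights $N+1$ to $y_1$ and $M+1$ to $y_2$. Condition (\ref{indexcondition}) then says exactly that every monomial $y_1^i y_2^j$ with $(i,j)\in I$ has strictly lower weight than the two top monomials $y_1^{M+1}$, $y_2^{N+1}$, both of weight $(M+1)(N+1)$. Then I would define
\begin{equation*}
E(z) = H(z,y_1(z),y_2(z)) + \sum_{k} \beta_k(z)\, y_1(z)^{i_k} y_2(z)^{j_k},
\end{equation*}
where the sum runs over a finite set of monomials of sub-maximal weight, and the functions $\beta_k(z)$ are chosen inductively, level by level from top weight downward, so that the derivative $E'(z)$ computed along (\ref{hameqns}) has no monomial above some pre-assigned threshold weight. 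The resulting remainder satisfies an estimate $|E'(z)| \leq K\bigl(|E(z)|^{\theta}+1\bigr)$ with some $\theta<1$ and a constant $K$ depending on the coefficient functions $\alpha_{ij}$ on a neighbourhood of $\gamma$. Integrating this differential inequality along the rectifiable curve $\gamma$ gives $|E(z)| \leq B$ up to $z_\infty$.

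Extracting from the boundedness of $E$ the expected bounds $|y_1(z)| = O\bigl(|z-z_\infty|^{-(N+1)/(MN-1)}\bigr)$ and $|y_2(z)| = O\bigl(|z-z_\infty|^{-(M+1)/(MN-1)}\bigr)$ is then a matter of inverting the dominant part of $H$. Setting $t = (z-z_\infty)^{1/(MN-1)}$ and $u(t) = t^{N+1} y_1$, $v(t) = t^{M+1} y_2$ transforms (\ref{hameqns}) into a first-order system in $(u,v)$ with a regular singular point at $t=0$, and the a priori bound says $(u,v)$ stays bounded as $t\to 0$. The hypothesized formal series identify the admissible fixed points at $t=0$; the branch of $t$ selected by analytic continuation along $\gamma$ picks one of them. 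A standard Cauchy-type existence theorem for the transformed system then gives a convergent Taylor expansion of $(u,v)$ at $t=0$. Translating back to $z$ yields (\ref{solutionexpansion}), and the refinement to powers of $(z-z_\infty)^{d/(MN-1)}$ follows from the quasi-homogeneous invariance of the dominant part of the system, which forces all nonzero coefficients in the recursion to sit at $t$-powers divisible by $d$.

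The principal obstacle is the construction of $E$: one must verify that the linear systems determining the correction coefficients $\beta_k(z)$ (together with any further nonlinear corrections) can be solved level by level. This is where the hypothesis on formal series solutions is used in an essential way — the resonance conditions that guarantee (\ref{formalseries}) are precisely the compatibility conditions for these linear systems. The remaining pieces (the Grönwall-type bound, the scaling, and the analytic solvability of the rescaled system) are relatively standard once the correct $E$ has been produced.
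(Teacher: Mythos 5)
Your overall strategy is the same as the paper's (an auxiliary function built from $H$ that stays bounded along $\gamma$, followed by a change of variables giving a regular initial value problem), but two of its steps contain genuine gaps. First, the auxiliary function cannot be built from \emph{polynomial} correction terms of sub-maximal weight: differentiation along the flow of (\ref{hamsystem}) raises the quasi-homogeneous weight (weights $N+1$ for $y_1$, $M+1$ for $y_2$) by exactly $MN-1$, so to cancel an unbounded term $\alpha_{ij}'y_1^iy_2^j$ of weight $i(N+1)+j(M+1)$ one needs a correction of weight lower by $MN-1$, which is necessarily rational in $y_1$; in the paper these are the terms $\beta_{kl}(z)\,y_2^k/y_1^l$ with $k=j+1$, $l=M-i$ in (\ref{Wdef}). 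Once the corrections are rational, boundedness of the auxiliary function along $\gamma$ is not automatic: one must first deform $\gamma$, keeping finite length, so that $y_1$, $y_2$ and all ratios $y_2^k/y_1^l$ with $l(N+1)-k(M+1)\geq 0$ remain bounded on the new curve (Lemmas \ref{firstmodification} and \ref{boundedpowers}); your sketch omits this entirely. Moreover, no choice of $\beta_{kl}$ can remove the contributions at order $(z-z_0)^{-1}$ (they would correspond to logarithms), and this is precisely where the hypothesis enters: inserting the $d$ admissible formal series (\ref{formalseries}), whose leading coefficients differ by the $d$ roots of $\omega^d=-1$, yields a Vandermonde system forcing those $d$ residue-level coefficients to vanish identically. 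Your remark that the resonance conditions are ``compatibility conditions for the linear systems'' gestures at this but does not identify the mechanism, and the asserted inequality $|E'|\leq K(|E|^\theta+1)$ is never derived; the paper instead obtains a linear equation $W'=PW+Q+R'$ with coefficients bounded on the modified curve and applies Lemma \ref{firstorderlinear}.

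Second, the endgame does not go through as stated. Boundedness of the auxiliary function does not directly give the rates $|y_1|=O(|z-z_\infty|^{-(N+1)/(MN-1)})$, and your rescaling $u=t^{N+1}y_1$, $v=t^{M+1}y_2$ with $t=(z-z_\infty)^{1/(MN-1)}$ produces a system that is \emph{singular} at $t=0$ (of Briot--Bouquet type $t\,du/dt=\cdots$); a bounded solution at such a point need not be analytic there, so ``a standard Cauchy-type existence theorem'' does not yield a convergent Taylor expansion without an analysis of the linearisation. The paper avoids this by first showing, via Lemma \ref{painleve}, that $y_1\to\infty$ along the curve, and then taking $u$ with $y_1=u^{-(N+1)/d}$ as the new \emph{independent} variable: the relation (\ref{Wdef}) is solved for $w=y_2u^{(M+1)/d}$ by the implicit function theorem, and $(z,v)$ satisfy a genuinely regular initial value problem in $u$, with $dz/du=u^{(MN-1)/d-1}A$ and $A(0,z_\infty,v_0)\neq 0$; inverting the resulting series gives (\ref{solutionexpansion}), and the refinement by $d$ comes from this substitution rather than from a separate quasi-homogeneity argument. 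To repair your proposal you would need to supply the rational corrections, the curve modification, the Vandermonde step using the formal series, and a non-singular reformulation of the final initial value problem.
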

\begin{remark}
\label{remark1}
As mentioned above, the existence of the formal series solutions (\ref{formalseries}) is an assumption on the form of the equations, each formal series being equivalent to a differential relation between the coefficients $\alpha_{ij}(z)$. Whereas the existence of the formal series is clearly necessary for the solution to be represented by (\ref{solutionexpansion}) near a movable singularity, the theorem states that every movable singularity is of this form. If $(N+1) \nmid (MN-1)$, i.e. $d=1$, then there is really only one leading order behaviour for the solution (\ref{solutionexpansion}) as the choice of branch for $c_{1,-N-1}$ can be absorbed into the choice of branch for $(z-z_\infty)^{1/(MN-1)}$. In general there will be $d$ possible leading order behaviours.
\end{remark}

We assume in the following and for the rest of the article that $N \geq M$. In the neighbourhood of any movable singularity one can let
\begin{equation*}
\begin{aligned}
\tilde{y}_1(z) =& \left(\alpha_{M+1,0}(z)^N\alpha_{0,N+1}(z)\right)^{\frac{1}{MN-1}} \left( y_1(z) + \frac{\alpha_{M,0(z)}}{\alpha_{M+1,0}(z)} \right), \\
\tilde{y}_2(z) =& \left(\alpha_{M+1,0}(z)\alpha_{0,N+1}(z)^M\right)^{\frac{1}{MN-1}} \left( y_2(z) + \frac{\alpha_{0,N}(z)}{\alpha_{0,N+1}(z)} \right),
\end{aligned}
\end{equation*}
to achieve that the transformed Hamiltonian $\tilde{H}$ is of the same form as in (\ref{Hamiltonian}) but with $\tilde{\alpha}_{M+1} \equiv 1 \equiv \tilde{\alpha}_{0,N+1}$ and $\tilde{\alpha}_{0N} \equiv 0$ (and also $\tilde{\alpha}_{M0} \equiv 0$ if $N=M$). In the following we will assume that the Hamiltonian is already given in this normalised form and readily omit the tildes again,
\begin{equation}
\label{hamiltonian}
H(z,y_1,y_2) = y_1^{M+1} + y_2^{N+1} + \sum_{(i,j) \in I'} \alpha_{ij}(z)y_1^iy_2^j,
\end{equation}
where $I' = I \setminus \{(0,N)\}$, the Hamiltonian equations being 
\begin{equation}
\label{hamsystem}
\begin{aligned}
y_1' =& (N+1)y_2^N+ \sum_{(i,j) \in I'} j \alpha_{ij}(z) y_1^i y_2^{j-1}, \\
y_2' =& -(M+1)y_1^M - \sum_{(i,j) \in I'} i \alpha_{ij}(z) y_1^{i-1} y_2^j.
\end{aligned}
\end{equation}
For $N \geq M$, condition (\ref{indexcondition}) in fact implies that $j \leq N-1$ for all $(i,j) \in I'$.

\section{Preliminary lemmas}
\noindent We will make repeated use of the following lemma by Painlev\'e, see e.g. \cite{hille}.
\begin{lemma}
\label{painleve}
Let $F_k(z,y_1,\dots,y_m)$, $k=1,\dots,m$, be analytic functions in a neighbourhood of a point $(z_\infty,\eta_1,\dots,\eta_m) \in \mathbb{C}^{m+1}$. Let $\gamma$ be a curve with end point $z_\infty$  and suppose that $(y_1,\dots,y_m)$ are analytic on $\gamma \setminus \{z_\infty\}$ and satisfy
\begin{equation*}
y'_k = F_k(z,y_1,\dots,y_m), \quad k=1,\dots,m.
\end{equation*}
Suppose there is a sequence $(z_n)_{n \in \mathbf{N}} \subset \gamma$ such that $z_n \to z_\infty$ and $y_k(z_n) \to \eta_k \in \mathbf{C}$ as $n \to \infty$ for all $k=1,\dots,n$. 

Then the solution can be analytically continued to include the point $z_\infty$.
\end{lemma}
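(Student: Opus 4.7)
The natural approach is to invoke the Cauchy existence and uniqueness theorem for analytic systems of ODEs in a form that gives a \emph{uniform} radius of existence for all initial conditions in a small polydisc. Concretely, I would fix $r>0$ so small that each $F_k$ is analytic and bounded, say by $A$, on the closed polydisc
\[
\Delta = \{(z,w_1,\dots,w_m)\in\mathbb{C}^{m+1} : |z-z_\infty|\le r,\ |w_k-\eta_k|\le r\ \text{for all }k\}.
\]
By the standard majorant/Picard argument there exists $\delta = \delta(r,A,m)>0$ such that, for \emph{any} initial data $(z^*,w_1^*,\dots,w_m^*)$ with $|z^*-z_\infty|\le r/2$ and $|w_k^*-\eta_k|\le r/2$, the initial value problem $y_k(z^*)=w_k^*$, $k=1,\dots,m$, has a unique holomorphic solution on the disc $|z-z^*|<\delta$ with values remaining inside $\Delta$.

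Given this uniform statement, the sequential information about $z_n$ does the rest. I would choose $n$ large enough that $|z_n-z_\infty|<\min(\delta/2,r/2)$ and $|y_k(z_n)-\eta_k|<r/2$ for every $k$, which is possible by the hypotheses $z_n\to z_\infty$ and $y_k(z_n)\to\eta_k$. Applying the uniform existence result with $(z^*,w_k^*)=(z_n,y_k(z_n))$ produces a holomorphic vector $(\tilde y_1,\dots,\tilde y_m)$ on the disc $D_n=\{z:|z-z_n|<\delta\}$, and $z_\infty\in D_n$ by the choice of $n$.

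It remains to match $(\tilde y_1,\dots,\tilde y_m)$ with the given solution. On the connected component of $D_n \cap (\gamma\setminus\{z_\infty\})$ containing $z_n$, both $(y_1,\dots,y_m)$ and $(\tilde y_1,\dots,\tilde y_m)$ are holomorphic and satisfy the same system with the same value at $z_n$; by local uniqueness they agree in a full $\mathbb{C}$-neighbourhood of $z_n$, and then by analytic continuation along $\gamma$ they agree on the entire connected piece. Hence $(\tilde y_1,\dots,\tilde y_m)$ provides the desired holomorphic extension across $z_\infty$.

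The main obstacle is really the uniform-radius version of Cauchy's theorem; this is the only step that requires genuine care, but it is classical and follows from the majorant method by comparing with a scalar equation such as $u'=A/(1-(u-\eta)/r)$, yielding an explicit $\delta$ of order $r/(A+1)$. Once that uniformity is in hand, the choice of $z_n$ deep inside the polydisc and the uniqueness step are routine, and no further hypothesis on the geometry of $\gamma$ is required beyond the existence of a sequence $z_n\to z_\infty$ along $\gamma$ on which the $y_k$ remain bounded and convergent.
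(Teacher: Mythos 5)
Your proposal is correct and is essentially the paper's own argument: fix a closed polydisc around $(z_\infty,\eta_1,\dots,\eta_m)$ on which the $F_k$ are bounded, invoke Cauchy's local existence theorem with a radius uniform over all initial data in the half-size polydisc (the paper writes it explicitly as $\rho = \tfrac{r}{2}\bigl(1-e^{-1/((m+1)M)}\bigr)$), and choose $n$ so large that the existence disc centred at $z_n$ contains $z_\infty$. Your final matching-by-uniqueness step is left implicit in the paper but is the same route, so there is nothing to add.
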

\begin{proof}
We can choose some $r$ such that all the functions $F_k$, $k=1,\dots,m$ are analytic in the set $D = \{ |z-z_\infty| \leq r, |y_k - \eta_k| \leq r, k=1,\dots,m \}$ and let $M = \max\{|F_k(z,y_1,\dots,y_m)| : (z,y_1,\dots,y_m) \in D, k = 1,\dots,m \}$. From some $n$ onwards, $\{|z-z_n| < r/2, |y_k-y_k(z_n)| < r/2, k=1,\dots,m \} \subset D$. By Cauchy's local existence and uniqueness theorem, a solution around $z_n$ is defined at least in the disc of radius $\rho = \frac{r}{2} \left( 1 - e^{-\frac{1}{(m+1)M}} \right)$. For some $n$ we have $z_\infty \in B(z_n,\rho)$.
\end{proof}

\noindent The next lemma is needed to show that an auxiliary function $W$, which will be constructed from the Hamiltonian $H$ in section \ref{firstintegral}, is bounded along $\gamma$. $W$ will be shown to satisfy a first-order linear differential equation of the form (\ref{lineardeq}) below. The lemma converts this into an integral representation for $W$.

\begin{lemma}
\label{firstorderlinear}
Let $\gamma$ be a finite length curve in the complex plane and let $P(z)$, $Q(z)$ and $R(z)$ be bounded functions on $\gamma$. Then any solution of the equation
\begin{equation}
\label{lineardeq}
W' = PW + Q + R',
\end{equation}
is also bounded on $\gamma$.
\end{lemma}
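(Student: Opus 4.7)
The natural approach is the integrating factor method, with one extra twist to handle the $R'$ term, since $R'$ is not assumed to be bounded on $\gamma$ (only $R$ is). Fix a basepoint $z_0 \in \gamma$ and define
\begin{equation*}
\mu(z) = \exp\!\left( -\int_{z_0}^{z} P(\zeta)\, d\zeta \right),
\end{equation*}
where the integral is taken along $\gamma$. Because $P$ is bounded on $\gamma$ and $\gamma$ is rectifiable of finite length $L$, one has $\bigl|\int_{z_0}^{z} P\, d\zeta\bigr| \leq \|P\|_\infty L$ for every $z \in \gamma$, hence both $\mu$ and $\mu^{-1}$ are bounded on $\gamma$.

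Since $\mu' = -P\mu$, multiplying (\ref{lineardeq}) by $\mu$ collapses the left-hand side into an exact derivative, $(\mu W)' = \mu Q + \mu R'$. Integrating along $\gamma$ from $z_0$ to $z$ gives
\begin{equation*}
\mu(z) W(z) = \mu(z_0) W(z_0) + \int_{z_0}^{z} \mu(\zeta) Q(\zeta)\, d\zeta + \int_{z_0}^{z} \mu(\zeta) R'(\zeta)\, d\zeta.
\end{equation*}
The first integral is plainly bounded, since $\mu Q$ is bounded on $\gamma$ and $\gamma$ has finite length.

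The only delicate point is the second integral, where $R'$ is not a priori controlled. I would resolve it by integration by parts, moving the derivative from $R$ onto $\mu$ and using $\mu' = -P\mu$:
\begin{equation*}
\int_{z_0}^{z} \mu R'\, d\zeta = \mu(z)R(z) - \mu(z_0)R(z_0) + \int_{z_0}^{z} P(\zeta)\,\mu(\zeta)\,R(\zeta)\, d\zeta.
\end{equation*}
Each term on the right is bounded along $\gamma$ since $\mu$, $P$, and $R$ are all bounded and $\gamma$ is rectifiable. Dividing through by $\mu(z)$, which is bounded away from zero, yields a uniform bound on $|W(z)|$ for $z \in \gamma$. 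The main (and essentially only) obstacle is this handling of $R'$; everything else is the standard variation-of-constants formula on a rectifiable contour.
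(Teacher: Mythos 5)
Your proof is correct and is essentially the paper's argument: after the integration by parts your identity rearranges to exactly the closed-form solution $W = R + I\bigl(C + \int (Q+PR)I^{-1}\,d\zeta\bigr)$ that the paper writes down directly, with $\mu = I^{-1}$. Both proofs then conclude from boundedness of $P$, $Q$, $R$ and the finite length of $\gamma$ that the integrating factor, its reciprocal, and the integrals are bounded.
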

\begin{proof}
Choosing a point $z_0 \in \gamma$ the solution can be written as
\begin{equation*}
W(z) = R(z) + I(z) \left( C + \int_{z_0}^z (Q(\zeta)+P(\zeta)R(\zeta))I(\zeta)^{-1} d \zeta \right),
\end{equation*}
where $C=W(z_0) - R(z_0)$ is an integration constant and $I$ is the integrating factor
\begin{equation*}
I(z) = \exp \left( \int_{z_0}^z P(\zeta) d \zeta \right).
\end{equation*}
Since $P$, $Q$ and $R$ are bounded on $\gamma$ and $\gamma$ has finite length, $I(z)$ and $I(z)^{-1}$ are bounded and hence $W(z)$ itself is bounded on $\gamma$.
\end{proof}

\section{Curve modification}

In this section we will show that the curve $\gamma$ leading up to a singularity can be modified to a curve $\tilde{\gamma}$, still of finite length, such that it avoids the zeros of a solution $(y_1,y_2)$ of (\ref{hameqns}). This is a technical necessity to show that the auxiliary function $W$, to be constructed in section \ref{firstintegral}, is bounded on $\gamma$. The proof runs along the lines of a lemma by S. Shimomura \cite{shimomura1} in which he showed that for a solution $y(z)$ of a second order ODE of the form $y'' = E(z,y)(y')^2 + F(z,y) y' + G(z,y)$, by modifying a curve $\gamma$ ending in a singularity one can achieve that $y$ is bounded away on $\tilde{\gamma}$ from some fixed value $c$ for which the equation is non-singular.

Consider a differential system of two equations in $y_1$ and $y_2$ of the form
\begin{equation}
\label{system}
\begin{aligned}
y_1' = & F_1(z,y_1,y_2) \\
y_2' = & F_2(z,y_1,y_2)
\end{aligned}
\end{equation} 
where $F_1,F_2 \in \mathcal{O}_D[y_1,y_2]$ are polynomials in $y_1$, $y_2$ with coefficients analytic in some domain $D$ which we take to be a disc $D=\{z \in \mathbf{C} : |z-a| \leq R_0 \}$. We assume that $F_1,F_2$ are of the form 
\begin{equation}
\begin{aligned}
F_1(z,y_1,y_2) =& \alpha_{1 0 N_1} y_2^{N_1} + \sum_{j=0}^{M_1}\sum_{k=0}^{N_1-1} \alpha_{1jk}(z) y_1^j y_k^k, \\
F_2(z,y_1,y_2) =& \alpha_{2 M_2 0} y_1^{M_2} + \sum_{j=0}^{M_2-1}\sum_{k=0}^{N_2} \alpha_{2jk}(z) y_1^j y_k^k,
\end{aligned}
\end{equation}
where $N_1 \geq N_2$, $M_2 \geq M_1$ and $\alpha_{1 0 N_1}, \alpha_{2 M_2 0}$ are constants with $|\alpha_{1 0 N_1}| \geq 1$, $|\alpha_{2 M_2 0}| \geq 1$. Let $K>1$ be a constant so that $|\alpha_{ijk}(z)| < K$ for all $i,j,k$ and $z \in D$. Also, let $N_1:=N, M_2:=M$ and $C:=2^{N+1} (M+1)(N+1)K$.
\begin{lemma}
\label{circle}
Let $0 < \Delta < 1$ and $\theta := \min\{\frac{\Delta}{C},R_0\}$. Let $(y_1,y_2)$ be a solution of (\ref{system}) analytic at a point $c$ for which $|c-a| < \frac{R_0}{2}$. Suppose that $|y_1(c)| < \frac{\theta}{8}$ and $|y_2(c)| > C$. Then $(y_1(z),y_2(z))$ is analytic on the disc $|z-c| < \frac{\theta}{|y_2(c)|}$ and satisfies $|y_1(z)| \geq \frac{\theta}{8}$ and $|y_2(z)| \geq 1$ on the circle $|z-c| = \frac{\theta}{2 |y_2(c)|}$.
\end{lemma}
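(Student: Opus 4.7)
The core idea is to exploit the scale separation $|y_2(c)|>C \gg |y_1(c)|$: in this regime the dominant part of $y_1'$ is the single monomial $\alpha_{1,0,N_1}y_2^N$, of magnitude $\sim|y_2(c)|^N$, whereas $|y_2'|$ is relatively controlled because its leading term $\alpha_{2,M_2,0}y_1^M$ and all mixed terms involve the small $y_1$. Over the $z$-interval of length $\theta/|y_2(c)|$, $y_1$ therefore has just enough time to grow from $|y_1(c)|<\theta/8$ to modulus at least $\theta/8$, while $y_2$ is prevented from dropping below $1$.

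For analyticity on the open disc $|z-c|<\theta/|y_2(c)|$, I would apply Lemma~\ref{painleve} on a polydisc about $(c,y_1(c),y_2(c))$ with $z$-radius $\theta/|y_2(c)|$ and $y_i$-radii chosen large enough to absorb the anticipated motion of $y_1$ (of order $\theta|y_2(c)|^{N-1}$) and the possible drift of $y_2$ (up to roughly $|y_2(c)|-1$). On this polydisc I would bound each of the at most $(M+1)(N+1)$ monomials of $F_1,F_2$ termwise using $|\alpha_{ijk}|<K$ and $|y_2|\le 2|y_2(c)|$; combined with $\theta\le\Delta/C$, these majorants make the Cauchy radius of existence exceed $\theta/|y_2(c)|$, and a standard continuation argument then extends the solution to the full open disc.

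For the lower bound on $|y_1|$ at a point $z$ on the circle $|z-c|=\theta/(2|y_2(c)|)$, I would integrate along the straight segment from $c$ to $z$ and split
\[
y_1(z) = y_1(c) + \alpha_{1,0,N_1}\,y_2(c)^N\,(z-c) + E(z),
\]
where $E(z)$ collects (i) the contributions of the sub-dominant monomials of $F_1$, and (ii) the variation of $y_2^N$ between $y_2(\zeta)$ and $y_2(c)$ along the path. The principal term has modulus at least $|y_2(c)|^{N-1}\theta/2 \ge \theta/2$ (using $|\alpha_{1,0,N_1}|\ge 1$ and $|y_2(c)|\ge 1$), while the polydisc bounds and the short path length force $|E(z)|\le\theta/4$; combined with $|y_1(c)|<\theta/8$, this gives $|y_1(z)| \ge \theta/2-\theta/4-\theta/8 = \theta/8$. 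The bound $|y_2(z)|\ge 1$ follows analogously from integrating $y_2'$ along the same segment and showing that $|y_2(z)-y_2(c)|\le |y_2(c)|-1$, again using the polydisc maximum of $|F_2|$ times the path length $\theta/(2|y_2(c)|)$.

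The main obstacle is the numerical bookkeeping: every estimate on $|F_1|,|F_2|$ must be tight enough to produce the clean inequalities $|y_1|\ge\theta/8$ and $|y_2|\ge 1$, and the explicit value $C=2^{N+1}(M+1)(N+1)K$ is calibrated precisely so that the binomial factor $2^{N+1}$ from the expansion $y_2^N=(y_2(c)+(y_2-y_2(c)))^N$, the monomial count $(M+1)(N+1)$, and the uniform coefficient bound $K$ together fit into the allowed slack. Conceptually the argument reduces to Cauchy's local existence theorem plus integration of the leading-order term of $F_1$; the work is entirely in chasing constants through the polydisc estimates.
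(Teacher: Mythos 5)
Your outline reproduces the right heuristic (integrate the dominant monomial $\alpha_{10N}y_2^N$, expand $y_2^N=(y_2(c)+(y_2-y_2(c)))^N$ binomially, and absorb everything else into the slack built into $C$), but it is pinned to the wrong scale, and that is a genuine gap rather than bookkeeping. You keep the literal radius $\theta/|y_2(c)|$ and compensate by letting $y_1$ range over a polydisc of radius of order $\theta|y_2(c)|^{N-1}$. Once $|y_1|$ is allowed to be that large the majorants no longer close: the bound on $|F_2|$ is at least of order $\bigl(\theta|y_2(c)|^{N-1}\bigr)^{M}$, so over a path of length $\theta/|y_2(c)|$ the drift of $y_2$ is only controlled by $\theta^{M+1}|y_2(c)|^{M(N-1)-1}$, which exceeds $|y_2(c)|-1$ whenever $M(N-1)>2$ and $|y_2(c)|$ is large; likewise a mixed monomial $\alpha_{1jk}y_1^jy_2^k$ with $j\geq 1$ contributes to your error term $E$ an amount of order $\bigl(\theta|y_2(c)|^{N-1}\bigr)^{j}\,|y_2(c)|^{k-1}\theta$, unbounded in $|y_2(c)|$, not $\leq\theta/4$. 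The same computation shows a single Cauchy estimate on your polydisc yields an existence radius that decays like a negative power of $|y_2(c)|$, far short of $\theta/|y_2(c)|$, and the ``standard continuation argument'' you invoke needs exactly the a priori bounds that fail here (Lemma~\ref{painleve} itself only continues a solution to a point where the values stay bounded along a sequence). In fact the conclusion cannot hold at your scale in general: the dominant system is invariant under $y_1\mapsto\lambda^{N+1}y_1$, $y_2\mapsto\lambda^{M+1}y_2$, $z\mapsto\lambda^{-(MN-1)}z$, so data with $y_1(c)=0$ and $|y_2(c)|=Y$ generically produce a movable singularity at distance of order $Y^{-(MN-1)/(M+1)}$ from $c$, which is much smaller than $\theta/Y$ as soon as $M(N-1)>2$.

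The missing idea is the rescaling that the paper's proof is built on: set $\rho=y_2(c)^N$, $\zeta=\rho(z-c)$, so that the disc on which analyticity is actually established is $|z-c|<\theta/|y_2(c)|^{N}$ and the circle carrying the lower bounds is $|z-c|=\theta/(2|y_2(c)|^{N})$ (the $|y_2(c)|$ appearing in the statement should be read at this scale; the curve-modification Lemma~\ref{firstmodification} only needs circles of some radius on which $|y_1|,|y_2|\geq\theta/8$). At that scale one runs a bootstrap, $r_0=\sup\{r: M_1(r)<\Delta,\ M_2(r)<2|\rho|^{1/N}\}$, which keeps $|y_1|<\Delta<1$ and $|y_2|$ within a factor $2$ of $y_2(c)$ on the whole disc; then every subdominant monomial of (\ref{system}), after division by $|\rho|$, is bounded by $2^NK$ uniformly in $|y_2(c)|$, a contradiction argument gives $r_0\geq\theta$, and the binomial-expansion lower bound for $|y_1|$ on $|\zeta|=\theta/2$ goes through exactly as you intend --- but only because $y_1$ has stayed small and $y_2$ essentially constant. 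So the fix is not tighter constants but shrinking the disc so that $|y_1|<\Delta$ is preserved throughout.
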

\begin{proof}
Let $\rho = y_2(c)^N$, $\zeta = \rho(z-c)$ and define $\eta_i(\zeta) := y_i(z)$, $i=1,2$. Denoting the derivative with respect to $\zeta$ by a dot we have $\dot{\eta}_i(\zeta) = \rho^{-1} y'_i(z)$ and
\begin{equation*}
\eta_i(\zeta) = \eta_i(0) + \int_0^\zeta \dot{\eta}_i(\tilde{\zeta}) d \tilde{\zeta},
\end{equation*}
where $\eta_i(0)=y_i(c)$. Define the functions $M_i(r) = \max_{|\zeta| \leq r} |\eta_i(\zeta)|$, $i=1,2$, and let $r_0 = \sup\{r : M_1(r) < \Delta, M_2(r) < 2|\rho|^{1/N}$\}. Clearly we have $r_0 > 0$. For $|\zeta| < \min\{r_0,R_0\}$ we have, since $|z-a| \leq |z-c| + |c-a| < \frac{R_0}{|\rho|} + \frac{R_0}{2} \leq R_0$,
\begin{equation}
\label{commoneta}
|\eta_i(\zeta)| \leq |y_i(c)| + |\rho|^{-1} |\zeta| \sum_{j=0}^{M_i} \sum_{k=0}^{N_i} K \Delta^j 2^k |\rho|^{\frac{k}{N}} \leq |y_i(c)| + |\zeta| 2^N K(N+1)(M+1).
\end{equation} 
Now suppose that $r_0 < \theta$. Then, for $|\zeta| < r_0 < R_0$ we have the estimates
\begin{equation*}
\begin{aligned}
|\eta_1(\zeta)| < & \theta (1/8+2^N(N+1)(M+1)K) < \Delta, \\
|\eta_2(\zeta)| < & |y_2(c)| + \theta 2^N(M+1)(N+1)K < 2|y_2(c)|,
\end{aligned}
\end{equation*}
in contradiction to the definition of $r_0$. Therefore we must have $r_0 \geq \theta$, showing that (\ref{commoneta}), $i=1,2$, is valid for $|\zeta| < \theta$ and therefore that $\eta_1$ and $\eta_2$ are analytic for $|\zeta| < \theta$. We now obtain estimates for $\eta_1$ and $\eta_2$ in the opposite direction on the circle $|\zeta| = \frac{\theta}{2}$:
\begin{equation*}
\begin{aligned}
|\eta_1(\zeta)| \geq & \left| \int_0^\zeta \rho^{-1} \alpha_{10N} \eta_2(\tilde{\zeta})^N \text{d} \tilde{\zeta} \right| -  \left| \int_0^\zeta \rho^{-1} \sum_{i=0}^{M_1} \sum_{j=0}^{N-1} \alpha_{1ij}(z) \eta_1^i \eta_2^j \text{d} \tilde{\zeta} \right| - |\eta_1(0)| \\
 \geq & \left| \int_0^\zeta \left(1 + \frac{\eta_2(\tilde{\zeta}) - \eta_2(0)}{\eta_2(0)}\right)^N \text{d} \tilde{\zeta} \right|  - \frac{\theta}{2} |\rho|^{-\frac{1}{N}} 2^{N-1}(M+1) N K - \frac{\theta}{8} \\
 \geq & \left| \int_0^\zeta \left( 1 + \sum_{n=1}^N \binom{N}{n} \left( \frac{\eta_2(\tilde{\zeta}) - \eta_2(0)}{\eta_2(0)} \right)^n \right) \text{d} \tilde{\zeta} \right| - \frac{\theta}{4} \\
 \geq & \frac{\theta}{2} - \frac{\theta}{2} \sum_{n=1}^N \binom{N}{n} \left(\frac{\Delta}{C}\right)^n - \frac{\theta}{4} \\
 \geq & \frac{\theta}{8}, \\
|\eta_2(\zeta)| \geq & |y_2(c)| -  \theta 2^N(M+1)(N+1)K \\
 \geq & 1.
\end{aligned}
\end{equation*}
\end{proof}
\begin{remark}
\label{interchange}
In Lemma \ref{circle} the role of $y_1$ and $y_2$ can be interchanged if in every expression one simultaneously replaces $M \leftrightarrow N$.
\end{remark}
Using Lemma \ref{circle} and Remark \ref{interchange} we can now show that a curve ending in a movable singularity of a solution $(y_1,y_2)$ of the system (\ref{system}) can be modified by arcs of circles in such a way that both $y_1$ and $y_2$ are bounded away from $0$ on the modified curve. The argument is very similar to the one in \cite{shimomura1}.
\begin{lemma}[1st curve modification]
\label{firstmodification}
Suppose $(y_1,y_2)$ is a solution of (\ref{system}), analytic on a finite length curve $\gamma \subset D$ up to, but not including its endpoint $z_\infty \in D$. Then we can deform $\gamma$, if necessary, in the region where $(y_1,y_2)$ is analytic, to a curve $\tilde{\gamma}$, still of finite length, such that $y_1$ and $y_2$ are bounded away from $0$ on $\tilde{\gamma}$ in a neighbourhood of $z_\infty$.
\end{lemma}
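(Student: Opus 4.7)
The plan is to iterate the deformation prescribed by Lemma~\ref{circle} (together with its interchanged version from Remark~\ref{interchange}) to excise from $\gamma$ the portions on which $y_1$ or $y_2$ is small, while controlling the total length added.

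First, I would replace $\gamma$ by its tail inside a small disc $U \subset D$ about $z_\infty$, chosen so that every point of the tail satisfies the ambient hypothesis $|c-a|<R_0/2$ of Lemma~\ref{circle}. I would then split into two cases. In Case~1, there exists a sequence $z_n \in \gamma$ with $z_n \to z_\infty$ along which $(y_1(z_n),y_2(z_n))$ is bounded. By Lemma~\ref{painleve} the solution then extends analytically through $z_\infty$, so the zero sets of $y_1$ and $y_2$ are discrete near $z_\infty$, only finitely many lie in any sufficiently small neighborhood meeting the tail, and each can be avoided by an elementary local detour of arbitrarily small length.

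In Case~2, $|y_1(z)|+|y_2(z)| \to \infty$ as $z \to z_\infty$ along $\gamma$. Fix $\Delta$, $\theta$ and $C$ as in Lemma~\ref{circle}. Call $c \in \gamma$ \emph{bad of type~1} if $|y_1(c)| < \theta/8$; by the Case~2 hypothesis, eventually along $\gamma$ one has $|y_2(c)|>C$, so Lemma~\ref{circle} applies and provides a disc $|z-c|<\theta/|y_2(c)|$ of analyticity for $(y_1,y_2)$, together with a circle $|z-c| = \theta/(2|y_2(c)|)$ on which $|y_1|\ge \theta/8$ and $|y_2|\ge 1$. I would replace the subarc of $\gamma$ lying inside this smaller disc by an arc of the bounding circle. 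Bad points of type~2, where $|y_2(c)|<\theta/8$, are handled identically by Remark~\ref{interchange}.

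The remaining, and main, issue is to carry out all such deformations simultaneously while keeping the length finite. For this I would perform a Vitali-style selection: choose a maximal disjoint subcollection among the closed discs $\overline{B(c,\theta/(2|y_2(c)|))}$ centered at bad points of $\gamma$. By disjointness and the fact that each such disc meets $\gamma$, the sum of their radii is bounded by a constant multiple of the length of $\gamma$, so the total added length (at most $\pi$ times the sum of these radii, one circular arc per detour) is finite. The delicate point will be checking that after these replacements no new bad points remain, so the procedure actually terminates: this is forced by the conclusion of Lemma~\ref{circle}, which guarantees $|y_1|\ge \theta/8$ and $|y_2|\ge 1$ on every arc used for a type~1 detour (and symmetrically for type~2), while outside the selected discs the defining inequality for badness already fails. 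Combining the resulting curve with the local detours of Case~1 at the isolated zeros yields a rectifiable $\tilde\gamma$ on which $\min\{|y_1|,|y_2|\}$ is bounded below by a positive constant throughout a neighborhood of $z_\infty$, as required.
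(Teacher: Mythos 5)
Your overall strategy is the same as the paper's (combine Lemma~\ref{circle} and Remark~\ref{interchange} with the dichotomy coming from Lemma~\ref{painleve}, detour around the small-value points along the circles on which Lemma~\ref{circle} gives the lower bounds $|y_i|\geq\theta/8$, $|y_{3-i}|\geq 1$, and control the added length by the radii), but the selection step is where your argument breaks down. A maximal disjoint (Vitali-type) subfamily of the discs $\overline{B(c,\theta/(2|y_2(c)|))}$ does not cover the set of bad points: maximality only guarantees that the disc of a non-selected bad point $c'$ \emph{intersects} some selected disc $\overline{B(c,r_c)}$, i.e.\ $|c'-c|\leq r_c+r_{c'}$, so $c'$ itself may well lie outside every selected disc. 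Hence your key assertion that ``outside the selected discs the defining inequality for badness already fails'' is unjustified and false in general, and the deformed curve can still run through points where $|y_1|<\theta/8$ (or $|y_2|<\theta/8$). The usual Vitali remedy of passing to dilated discs is not available here, because the lower bounds of Lemma~\ref{circle} hold only on the specific circle $|z-c|=\theta/(2|y_2(c)|)$ and analyticity is only guaranteed on $|z-c|<\theta/|y_2(c)|$, so you cannot detour along an enlarged circle. The paper avoids this problem by selecting the centres greedily along the arclength parametrisation: with $S$ the set of bad parameters, it sets $s_{n+1}=\inf\{s\in S: s>s_n+r_n\}$, so every bad parameter value is either a selected centre or lies within arclength $r_n$ of the previous centre $\gamma(s_n)$ and therefore inside the disc $D_n$ of radius $r_n$; the modified curve, taken as the one-sided boundary of $\gamma\cup\bigcup_n D_n$, then meets only arcs of the circles $C_n$ and points of $\gamma$ that are not bad, while $\sum_n r_n\leq\sum_n(s_{n+1}-s_n)\leq l$ controls the length. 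Your length estimate for the disjoint family is fine, but without a covering property of this kind the construction does not yield the conclusion.

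Two smaller points. First, your Case~1 is not really available as stated: if the solution extends analytically to $z_\infty$ and, say, $y_1(z_\infty)=0$, then no deformation of a curve ending at $z_\infty$ can keep $y_1$ bounded away from $0$ near $z_\infty$; the paper instead treats continuability to $z_\infty$ (via Lemma~\ref{painleve}) as contradicting the standing assumption that $z_\infty$ is a movable singularity, which is how the lemma is meant to be read and applied. Second, you should also verify that the selected centres accumulate only at $z_\infty$ (the paper does this by noting that otherwise $\max\{|y_1|,|y_2|\}$ would blow up at an interior point of $\gamma$ where the solution is analytic), so that the countably many detours indeed produce a curve terminating at $z_\infty$; this is omitted in your sketch, though it is minor compared with the covering gap.
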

\begin{proof}
Let $\gamma$ be parametrised by arclength such that $\gamma(0)=z_0$, $\gamma(l)=z_\infty$ where $l$ is the length of $\gamma$. Define the two sets
\begin{equation*}
S_i := \{ s: 0 < s < l \text{ and } |y_i(\gamma(s))| \leq \theta/8 \}, \quad i=1,2.
\end{equation*}
We assume that $\liminf_{s \to l^-} \min \{|y_1|,|y_2|\} = 0$, otherwise there is nothing to show. Therefore the union $S_1 \cup S_2$ contains values arbitrarily close to $l$. There now exists some number $0 < s_0 < l$ with the following two properties: (i) $S_1 \cap S_2 \cap [s_0,l) = \emptyset$, (ii) whenever $s \in S_i$, $s>s_0$, we have $|y_{3-i}(\gamma(s))| > C$. Namely, if this was not the case we could find a sequence $z_i = \gamma(s_i)$, $s_i \to l$, such that $(y_1(z_i),y_2(z_i))$ is bounded and hence, by Lemma \ref{painleve}, the solution could be analytically continued to $z_\infty$ in contradiction to the assumption. Denote $S = (S_1 \cup S_2) \cap [s_0,l)$ and let $s_1 = \inf \{ s \in S : s > s_0 \}$. Suppose that $s_1 \in S_i$ and let $r_1 = \frac{\theta}{2|y_{3-i}(\gamma(s_1))|}$. Lemma \ref{circle} now shows that that $y_1$ and $y_2$ are analytic for $|z-\gamma(s_1)| < 2 r_1$ and that $|y_i(z)| \geq \theta/8$ and $|y_{3-i}(z)| \geq 1$ on the circle $C_1 = \{z: |z-\gamma(s_1)| = r_1\}$. We now recursively define a sequence of points $s_n$ and circles $C_n$ with radii $r_n$ as follows: Let $s_{n+1} = \inf \{s \in S: s > s_n + r_n \}$. If $s_{n+1} \in S_i$ ($i=1 \text{ or } 2$), then let $r_{n+1} = \frac{\theta}{2|y_{3-i}(\gamma(s_n))|}$.

By Lemma \ref{circle}, for every circle $C_n$, $n=1,2,\dots$, we have $|y_1(z)|,|y_2(z)| \geq \frac{\theta}{8}$ for all $z \in C_n$. Also, $\sum_{n=1}^\infty r_n \leq \sum_{n=1}^\infty |s_{n+1}-s_n| \leq l$ which implies $r_n \to 0$ as $n \to \infty$. The centres $s_n$ of the circles accumulate at $z_\infty$: If this was not the case we would have $s_n \to s_\infty$ for some $s_\infty < l$, but then
\begin{equation*}
\lim_{n \to \infty} \max \{|y_1(\gamma(s_n)|,|y_2(\gamma(s_n)|\} \geq \lim_{n \to \infty} \frac{\theta}{2r_n} = \infty,
\end{equation*}
in contradiction to the fact that $(y_1(z),y_2(z))$ is analytic on $\gamma \setminus \{z_\infty\}$. We now define $\tilde{\gamma}$ in the following way. Suppose for convenience that $\gamma$ has no self-intersections (otherwise we could shorten $\gamma$ by omitting pieces between self-intersections). Let $\gamma_\text{ext}$ be an infinite non-intersecting extension of $\gamma$ such that $\gamma_\text{ext}(s) \to \infty$ for $s \to \pm \infty$ which divides the complex plane into parts $\mathbf{C}_+$ and $\mathbf{C}_-$ such that $\mathbf{C}_+$, $\gamma_\text{ext}$ and $\mathbf{C}_-$ are pairwise disjoint and $\mathbf{C}_+ \cup \gamma_\text{ext} \cup \mathbf{C}_- = \mathbf{C}$. Now let $D = \gamma \cup \bigcup_{n=1}^\infty D_n$ where $D_n = \{z: |z-\gamma(s_n)| \leq r_n\}$ and define $\tilde{\gamma} = \partial D \cap (\mathbf{C}_+ \cup \gamma_\text{ext})$. Then $(y_1,y_2)$ is analytic on $\tilde{\gamma}$ and $|y_1(z)|,|y_2(z)| \geq \frac{\theta}{8}$ for all $z \in \tilde{\gamma}$. Furthermore, $\tilde{\gamma}$ has length less than $(1+2\pi)l$. 
\end{proof}

We will now specialise the results obtained so far in this section to the Hamiltonian system (\ref{hamsystem}) which is of the form (\ref{system}) with $N_1=N$, $M_2=M$. Lemma \ref{firstmodification} is not quite enough to show that the auxiliary function $W$ in section \ref{firstintegral}, rational in $y_1$ and $y_2$, is bounded. We need to show that certain terms of the form $\frac{y_2^k}{y_1^l}$ are bounded. To do so we will apply a second curve modification where we can now make use of the fact that $y_1$ and $y_2$ are already bounded away from $0$ on $\gamma$. We rewrite the system of equations (\ref{hamsystem}) in the variables $u_1 = y_1 \cdot y_2^{-\frac{N+1}{M+1}}$ and $u_2 = y_2$ for some branch of $y_2^{\frac{1}{M+1}}$.

The system of equations in the variables $u_1$, $u_2$ becomes
\begin{equation}
\label{usystem}
\begin{aligned}
u_1' = & (N+1) u_2^{N-\frac{N+1}{M+1}}\left( 1 + u_1^{M+1} \right) + \sum_{(i,j) \in I'} \left( j + i\frac{N+1}{M+1} \right) \alpha_{ij} u_1^i u_2^{(i-1)\frac{N+1}{M+1}+j-1} \\
u_2' = & -(M+1) u_1^M u_2^{M\frac{N+1}{M+1}} - \sum_{(i,j) \in I'} i\alpha_{ij} u_1^{i-1} u_2^{(i-1)\frac{N+1}{M+1}+j}.
\end{aligned}
\end{equation}
Let $K>1$ be a constant such that $|i \alpha_{ij}(z)| < K$ and $\left|\left(j+i\frac{N+1}{M+1}\right)\alpha_{ij}(z)\right| < K$ for all $(i,j) \in \tilde{I} = I' \cup \{(M+1,0),(0,N+1)\}$, $z \in D$. As before let $C = 2^{N+1}K(M+1)(N+1)$. Suppose $(u_1(z),u_2(z))$ is a solution of (\ref{usystem}), corresponding to a solution $(y_1(z),y_2(z))$ of (\ref{hamsystem}) on a curve $\gamma$, which by Lemma \ref{firstmodification} we assume to be such that $y_1$ and $y_2=u_2$ are bounded away from $0$ on $\gamma$. The following Lemma is somewhat similar to Lemma \ref{circle}, the proof, however, requires some modifications.
\begin{lemma}
Let $0 < \Delta < 2^{-N-2}(N+1)^{-1} < 1$ and $\theta := \min\{ \frac{\Delta}{C}, R_0 \}$. Let $(u_1,u_2)$ be a solution of (\ref{usystem}) analytic at $c$ with $|c-a| \leq \frac{R_0}{2}$ and suppose that $|u_1(c)| < \frac{\theta}{8}$ and $|u_2(c)| > (4C)^{M+1}$. Then $(u_1(z),u_2(z))$ is analytic in the disc $|z-c| < \frac{\theta}{|u_2(c)|}$ and on the circle $|z-c| = \frac{\theta}{2|u_2(c)|}$ we have $|u_1(c)| \geq \frac{\theta}{8}$ and $|u_2(c)| \geq 1$. 
\end{lemma}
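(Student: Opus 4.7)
The plan is to adapt the argument of Lemma \ref{circle} to the transformed system (\ref{usystem}), handling the fractional exponents of $u_2$. I fix a branch of $u_2^{1/(M+1)}$ in a simply-connected neighbourhood of $c$, set $\rho = (N+1) u_2(c)^{(MN-1)/(M+1)}$ and introduce the rescaling $\zeta = \rho(z-c)$, $\eta_i(\zeta) := u_i(z)$. Under this choice the leading term of $\dot{\eta}_1$ at $\zeta=0$ is $(\eta_2(0)/u_2(c))^{(MN-1)/(M+1)}(1+\eta_1(0)^{M+1}) = 1+\eta_1(0)^{M+1} \approx 1$, while the leading term of $\dot{\eta}_2$ carries the factor $\eta_1^M$, which is small.

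As in the proof of Lemma \ref{circle}, define
\[
r_0 = \sup\{r > 0 : |\eta_1(\zeta)| < \Delta \text{ and } |\eta_2(\zeta)| < 2|u_2(c)| \text{ for all } |\zeta| \leq r\},
\]
and aim to show $r_0 \geq \theta$ by contradiction, using the integral form of the system. The key observation, following from the index condition (\ref{indexcondition}), is that after division by $\rho$: (i) the exponent of $|u_2(c)|$ in every monomial of $\dot{\eta}_1$ is strictly negative, so each such term is uniformly bounded and contributes $O(|\zeta|)$ to $\eta_1(\zeta)-\eta_1(0)$; (ii) the exponent of $|u_2(c)|$ in every non-leading monomial of $\dot{\eta}_2$ is at most $M/(M+1)$, since $\max_{(i,j)\in I',\,i\geq 1}[(i-1)(N+1)+j(M+1)-(MN-1)] = M$ by (\ref{indexcondition}). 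The hypothesis $|u_2(c)| > (4C)^{M+1}$, equivalent to $|u_2(c)|^{-1/(M+1)} < (4C)^{-1}$, is precisely what renders these fractional-exponent contributions negligible compared with $|u_2(c)|$, and combined with the $\Delta^M|u_2(c)|$ estimate on the leading term of $\dot{\eta}_2$ gives $|\eta_2(\zeta) - u_2(c)| < |u_2(c)|$ for $|\zeta| < \theta$. Both a priori bounds being strict then force $r_0 \geq \theta$.

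On the circle $|\zeta| = \theta/2$ the lower bounds follow as in Lemma \ref{circle}. For $\eta_1$ I isolate the main contribution $\int_0^\zeta (\eta_2(\tilde\zeta)/u_2(c))^{(MN-1)/(M+1)}(1+\eta_1(\tilde\zeta)^{M+1})\,d\tilde\zeta$ and expand the fractional power using the binomial series, which converges since $|\eta_2/u_2(c) - 1|$ is small for $|u_2(c)|$ large, so that the principal part is $\zeta$ plus a tail. The calibration $\Delta < 2^{-N-2}(N+1)^{-1}$ bounds the $\eta_1^{M+1}$ correction and the binomial tail by $\theta/4$ each, hence $|\eta_1(\zeta)| \geq \theta/2 - \theta/4 - \theta/8 = \theta/8$ after subtracting $|\eta_1(0)| < \theta/8$. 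For $\eta_2$ the estimate of the previous paragraph in fact bounds $|\eta_2(\zeta) - u_2(c)|$ by a small multiple of $|u_2(c)|$, so that $|\eta_2(\zeta)| \geq |u_2(c)|/2 \geq (4C)^{M+1}/2 \geq 1$.

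The main obstacle is verifying the structural identity $\max_{(i,j)\in I',\,i\geq 1}[(i-1)(N+1)+j(M+1)-(MN-1)] = M$, which is the reason the exponent $M+1$ appears in the hypothesis on $|u_2(c)|$: it quantifies how far below $|u_2(c)|^1$ the non-leading terms of $\dot{\eta}_2$ can grow. This is a short consequence of (\ref{indexcondition}), but is the structural point behind the whole estimate. The fractional-exponent binomial expansion in the final step is the direct analogue of the integer binomial bound $\sum_{n=1}^N\binom{N}{n}(\Delta/C)^n$ appearing in the proof of Lemma \ref{circle}, and is handled in the same way.
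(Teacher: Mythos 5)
Your overall strategy is the right one and matches the paper's: rescale with $\zeta=\rho(z-c)$ where $\rho$ is essentially $u_2(c)^{L}$, $L=N-\frac{N+1}{M+1}=\frac{MN-1}{M+1}$, run a bootstrap argument to get analyticity on $|\zeta|<\theta$, then obtain the lower bounds on $|\zeta|=\theta/2$ by isolating the term $(N+1)u_2^{L}$, expanding $\bigl(1+\frac{\eta_2-\eta_2(0)}{\eta_2(0)}\bigr)^{L}$, and using $|u_2(c)|^{-1/(M+1)}<(4C)^{-1}$ to kill the sub-leading contributions. However, there is a genuine gap in your bootstrap step: you define $r_0$ only through the conditions $|\eta_1|<\Delta$ and $|\eta_2|<2|u_2(c)|$, whereas the system (\ref{usystem}) contains \emph{negative} fractional powers of $u_2$ (e.g.\ the term with $(i,j)=(0,0)$ has exponent $-\frac{N+1}{M+1}-1$). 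To bound the integrands inside the bootstrap region — and indeed to keep the chosen branch of $u_2^{1/(M+1)}$ well defined and the vector field analytic along the solution — you need a \emph{lower} bound on $|\eta_2|$ to be part of the definition of $r_0$, exactly as in the paper's definition (\ref{r0def}), which adds the condition $m_2(r)>\frac12|\rho|^{1/L}$ and verifies it survives the contradiction step. Your claim that each non-leading monomial of $\dot\eta_1$ ``is uniformly bounded'' silently uses such a lower bound (an upper bound on $|\eta_2|$ gives nothing when the exponent is negative), and deriving $|\eta_2-u_2(c)|<|u_2(c)|$ only as an output does not repair this, since that estimate is itself obtained by integrating over the region where boundedness is being assumed. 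The fix is one line — add the condition $|\eta_2|>\frac12|u_2(c)|$ to the bootstrap set and show, as the paper does, that it is strict up to $|\zeta|=\theta$ — but as written the contradiction argument is not self-consistent; this is precisely the ``modification'' relative to Lemma \ref{circle} that the lemma requires.

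A smaller point: the ``main obstacle'' you identify, namely the exact identity $\max_{(i,j)\in I',\,i\geq 1}\bigl[(i-1)(N+1)+j(M+1)-(MN-1)\bigr]=M$, is not needed; only the inequality $\leq M$ matters (equivalently, the paper's observation from (\ref{indexcondition}) that $(i-1)\frac{N+1}{M+1}+j-1\leq L$, with $\bigl|(i-1)\frac{N+1}{M+1}+j-1\bigr|\leq N$ supplying the factor $2^N$), and that is an immediate consequence of (\ref{indexcondition}) since the quantities involved are integers. With the bootstrap condition on $m_2$ added and the exponent bookkeeping done with absolute values (to cover both signs of the exponents of $\eta_2$), your argument becomes the paper's proof.
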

\begin{proof}
Let $\rho = u_2(c)^L$, where $L = N - \frac{N+1}{M+1} \leq N-1$. For $i=1,2$ let $\eta_i(\zeta) := u_i(z)$, where $\zeta = \rho(z-c)$, and define $M_i(r) = \max_{|\zeta| \leq r} |\eta_i(\zeta)|$, $m_i(r) = \min_{|\zeta| \leq r} |\eta_i(\zeta)|$. Let 
\begin{equation}
\label{r0def}
r_0 = \sup \left\{r : M_1(r) < \Delta, M_2(r) < 2|\rho|^{1/L}, m_2(r) > \frac{1}{2}|\rho|^{1/L} \right\},
\end{equation}
which is positive as $|\eta_1(0)| < \Delta$ and $|\eta_2(0)| = |\rho|^{1/L}$. We have
\begin{equation*}
\eta_i(\zeta) = \eta_i(0) + \int_0^\zeta \dot{\eta}_i(\zeta) d \zeta,
\end{equation*}
where $\eta_i(0) = u_i(c)$ and $\dot{\eta}_i(\zeta) = \rho^{-1} u_i'(z)$. For $|\zeta| < \min\{r_0,R_0\}$ we have, since $|z-a| \leq |z-c| + |c-a| < \frac{R_0}{|\rho|} + \frac{R_0}{2} < R_0$,
\begin{equation}
\label{estimate1}
\begin{aligned}
|\eta_1(\zeta)| \leq & |u_1(c)| + |\rho|^{-1} |\zeta| \sum_{(i,j) \in \tilde{I} \setminus \{(0,0)\}} K \Delta^i 2^{|(i-1)\frac{N+1}{M+1}+j-1|} |\rho|^{((i-1)\frac{N+1}{M+1}+j-1)/L} \\
\leq & |u_1(c)| + |\zeta| 2^N K(M+1)(N+1), \\
\end{aligned}
\end{equation}
\begin{equation}
\label{estimate2}
\begin{aligned}
|\eta_2(\zeta)| \leq & |u_2(c)| + |\rho|^{-1} |\zeta| \sum_{\stackrel{(i,j) \in \tilde{I}}{i \neq 0}} K \Delta^{i-1} 2^{|(i-1)\frac{N+1}{M+1}+j|} |\rho|^{((i-1)\frac{N+1}{M+1}+j)/L} \\
\leq & |u_2(c)| \left( 1 + |\zeta| 2^N K (M+1)(N+1) \right), \\
|\eta_2(\zeta)| \geq &|u_2(c)| \left( 1 - |\zeta| 2^N K (M+1)(N+1) \right),
\end{aligned}
\end{equation}
where we have used condition (\ref{indexcondition}) which implies $(i-1)\frac{N+1}{M+1}+j-1 \leq L$ for $(i,j) \in \tilde{I} \setminus \{(0,0)\}$ and therefore $\left| (i-1)\frac{N+1}{M+1}+j-1\right| \leq N$. Now supposing that $r_0 < \theta$ one would obtain the estimates
\begin{equation*}
\begin{aligned}
|\eta_1(\zeta)| \leq & \theta (1/8 + 2^N K(M+1)(N+1)) < \Delta, \\
|\eta_2(\zeta)| \leq & |u_2(c)| \left( 1 + \theta 2^N K (M+1)(N+1) \right) < 2|\rho|^{1/L}, \\
|\eta_2(\zeta)| \geq &|u_2(c)| \left( 1 - \theta 2^N K (M+1)(N+1) \right) > \frac{1}{2}|\rho|^{1/L},
\end{aligned}
\end{equation*}
in contradiction to the definition (\ref{r0def}) of $r_0$. Therefore we must have $r_0 \geq \theta$, implying that the estimates (\ref{estimate1}), (\ref{estimate2}) are valid for $|\zeta| < \theta$ and that $u_1,u_2$ are analytic for $|\zeta| < \theta$. On the circle $|\zeta| = \frac{\theta}{2}$ we now have
\begin{equation*}
\begin{aligned}
|\eta_1(\zeta)| \geq & (N+1) \left| \int_0^\zeta \rho^{-1} \eta_2(\tilde{\zeta})^L d \tilde{\zeta} \right| - \left| \int_0^\zeta \rho^{-1} (N+1) \eta_1^{M+1} \eta_2^{N-\frac{N+1}{M+1}} d \tilde{\zeta} \right| \\ & - \left| \int_0^\zeta \rho^{-1} \sum_{(i,j) \in I'} \left( j + i\frac{N+1}{M+1} \right) \alpha_{ij} \eta_1^i \eta_2^{(i-1)\frac{N+1}{M+1}+j-1} d \tilde{\zeta} \right| - |\eta_1(0)| \\
 \geq & (N+1) \left| \int_0^\zeta \left(1 + \frac{\eta_2(\tilde{\zeta}) - \eta_2(0)}{\eta_2(0)} \right)^L d \tilde{\zeta} \right| - \frac{\theta}{2} (N+1)\Delta^{M+1} 2^L \\ & - \frac{\theta}{2} |\rho|^{-\frac{1}{L(M+1)}} 2^N K (M+1)(N+1) - \frac{\theta}{8} \\
 \geq & \left| \int_0^\zeta d \tilde{\zeta} \right| - \left| \int_0^\zeta \left( \left(1 + \frac{\eta_2(\tilde{\zeta}) - \eta_2(0)}{\eta_2(0)} \right)^L - 1 \right) d \tilde{\zeta} \right| - \frac{\theta}{4} \\
\geq & \frac{\theta}{4} - \frac{\theta}{2} \sum_{n=1}^{N} \binom{N}{n} \left(\frac{\Delta}{4C}\right)^n \geq \frac{\theta}{8}, \\
|\eta_2(\zeta)| \geq & \frac{1}{2}|\rho|^{1/L} > 1.
\end{aligned}
\end{equation*}
\end{proof}

\begin{lemma}[2nd curve modification]
\label{boundedpowers}
Let $(y_1,y_2)$ be a solution of the system (\ref{hamsystem}), analytic on the finite length curve $\gamma$ ending in a movable singularity $z_\infty$, such that $\frac{1}{y_1}$ and $\frac{1}{y_2}$ are bounded on $\gamma$. Then, after a possible deformation of $\gamma$ in the region where $y_1,y_2$ are analytic, one can achieve that $\frac{y_2^k}{y_1^l}$ is bounded on $\tilde{\gamma}$ for all $k,l \geq 0$ for which $l(N+1)-k(M+1) \geq 0$.
\end{lemma}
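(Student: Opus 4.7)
The plan is to replicate the curve-modification scheme of Lemma \ref{firstmodification}, but now working in the variables $(u_1,u_2)$ of the system (\ref{usystem}) and invoking the preceding lemma in place of Lemma \ref{circle}. The key algebraic reduction is
\begin{equation*}
\left(\frac{y_2^k}{y_1^l}\right)^{M+1} \;=\; u_1^{-l(M+1)}\, y_2^{-(l(N+1)-k(M+1))},
\end{equation*}
which follows from $y_1=u_1 y_2^{(N+1)/(M+1)}$ for the chosen branch. Under the hypothesis $l(N+1)-k(M+1)\geq 0$ the exponent on $y_2$ is non-positive; since $|y_2|\geq \theta/8$ is already supplied by Lemma \ref{firstmodification}, it suffices to produce $\tilde\gamma$ on which $|u_1|$ is also bounded away from $0$.

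Parametrise $\gamma$ by arclength, $\gamma:[0,l]\to\mathbf{C}$, and set $S=\{s\in[0,l):|u_1(\gamma(s))|\leq\theta/8\}$. The main obstacle, which I would tackle first, is to find $s_0<l$ with the dichotomy that for every $s\in S\cap(s_0,l)$ one also has $|u_2(\gamma(s))|>(4C)^{M+1}$. If this failed, there would be a sequence $s_n\to l$ with $|u_1(\gamma(s_n))|\leq\theta/8$ and $|u_2(\gamma(s_n))|\leq (4C)^{M+1}$; but then $|y_2|=|u_2|$ would be bounded along this sequence and so would $|y_1|=|u_1||u_2|^{(N+1)/(M+1)}$, so Lemma \ref{painleve} applied to the Hamiltonian system would continue $(y_1,y_2)$ analytically through $z_\infty$, contradicting the hypothesis that $z_\infty$ is a singularity.

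With $s_0$ in hand, I would iterate exactly as in Lemma \ref{firstmodification}: put $s_1=\inf\{s\in S:s>s_0\}$ and $r_1=\theta/(2|u_2(\gamma(s_1))|)$, then recursively define $s_{n+1}=\inf\{s\in S:s>s_n+r_n\}$ together with $r_{n+1}=\theta/(2|u_2(\gamma(s_{n+1}))|)$. The preceding lemma guarantees analyticity of $(u_1,u_2)$ in the disc $D_n=\{|z-\gamma(s_n)|<2r_n\}$ and the lower bounds $|u_1|\geq\theta/8$, $|u_2|\geq 1$ on the circle $\{|z-\gamma(s_n)|=r_n\}$. Since $\sum r_n\leq l$ we obtain $r_n\to 0$, and the centres must accumulate at $z_\infty$ by the same Painlev\'e-type argument as in Lemma \ref{firstmodification}. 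Defining $\tilde\gamma$ by replacing each portion of $\gamma$ inside $D_n$ with the corresponding outer circular arc yields a curve of total length at most $(1+2\pi)l$ on which $|u_1|\geq\theta/8$ throughout: on the unaltered parts because $s\notin S$ there, and on the arcs by the lemma. The boundedness of $y_2^k/y_1^l$ on $\tilde\gamma$ then follows at once from the displayed identity together with $|u_1|,|y_2|\geq \theta/8$.
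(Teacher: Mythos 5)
Your proposal is correct and follows essentially the same route as the paper: define $S$ via $|u_1|\leq\theta/8$, obtain the dichotomy $|u_2|>(4C)^{M+1}$ on $S$ near $z_\infty$ by a Painlev\'e-lemma contradiction, modify $\gamma$ by circular arcs using the $u$-system analogue of Lemma \ref{circle}, and conclude from the algebraic identity relating $y_2^k/y_1^l$ to $u_1^{-(M+1)}$ and $y_2^{-1}$. The only (harmless) deviation is that you apply Lemma \ref{painleve} to the original polynomial system (\ref{hamsystem}) rather than to (\ref{usystem}), which avoids having to note that $u_2$ stays away from zero along the sequence.
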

\begin{proof}
Define the set $S = \{s: 0 < s < l \text{ and } |u_1(\gamma(s))| \leq \theta/8 \}$. There exists some $s_0$, $0 < s_0 < l$, such that on $S \cap [s_0,l]$ one has $|u_2(z)| > (4C)^{M+1}$. For, if this was not the case, one would have a sequence of points $(z_n)$ on $\gamma$ with $z_n \to z_\infty$ as $n \to \infty$ such that $u_1(z_n)$ is bounded and $u_2(z_n)$ is bounded and bounded away from zero. Lemma \ref{painleve} applied to the system (\ref{usystem}) would then imply that $u_1,u_2$ are analytic at $z_\infty$ in contradiction to the assumption. By the same method as in the proof of Lemma \ref{firstmodification} one can now deform the curve $\gamma$ by arcs of circles such that $u_1$ and $u_2$ are bounded away from $0$ on the modified curve $\tilde{\gamma}$, that is, $u_1^{-(M+1)} = \frac{y_2^{N+1}}{y_1^{M+1}}$ and $u_2^{-1} = \frac{1}{y_2}$ are bounded on $\tilde{\gamma}$. By writing 
\begin{equation*}
\frac{y_2^k}{y_1^l} = \left( \left( \frac{y_2^{N+1}}{y_1^{M+1}} \right)^l \cdot \frac{1}{y_2^{l(N+1)-k(M+1)}} \right)^{1/(M+1)},
\end{equation*}
one can conclude that $\frac{y_2^k}{y_1^l}$ is bounded on $\tilde{\gamma}$ if $l(N+1) - k(M+1) \geq 0$.
\end{proof}

\section{An approximate first integral}
\label{firstintegral}
In this section we will show the existence of a function $W$ that remains bounded whenever a solution $(y_1(z),y_2(z))$ develops a movable singularity by analytic continuation along a finite length curve. Formally inserting the series expansions (\ref{formalseries}) for $y_1$ and $y_2$ into 
\begin{equation}
\label{Hdiffz}
H' = \frac{dH}{dz} = \frac{\partial H}{\partial z} = \sum_{(i,j) \in I'} \alpha_{ij}'(z) y_1(z)^i y_2(z)^j,
\end{equation}
yields a formal series expansion for $H'$ in $(z-z_0)^{\frac{1}{MN-1}}$. Heuristically, $W$ is constructed from $H$ by adding certain terms, rational in $y_1$ and $y_2$, which would cancel all terms of $H'$ with negative powers of $(z-z_0)^{\frac{1}{MN-1}}$. Note, however, that terms of power $(z-z_0)^{-1}$ cannot be cancelled in this way, since these would correspond terms of $H$ that are logarithmic in $z-z_0$ and cannot be obtained by rational expressions in $y_1$ and $y_2$. We define 
\begin{equation}
\label{Wdef}
W(z,y_1,y_2) = y_1^{M+1} + y_2^{N+1} + \sum_{(i,j) \in I'} \alpha_{ij}(z)y_1^iy_2^j + \sum_{(k,l) \in J} \beta_{kl}(z)\frac{y_2^k}{y_1^l},
\end{equation}
where the $\beta_{kl}(z)$ are certain analytic functions to be determined in terms of the $\alpha_{ij}(z)$ and their derivatives, and the index set $J$ is given by 
\begin{equation*}
J = \{(k,l) \in \mathbf{N}^2: 1 \leq k \leq N+1, 1-MN < k(M+1) - l(N+1) < M+N+2 \}.
\end{equation*}
Note that the pairs of indices in the set $J$ are in one-to-one correspondence with the elements of the set $I \setminus \{(0,0)\}$, which can easily be seen by setting $k=j+1$ and $l=M-i$. Thus for every unbounded term $\alpha_{ij}'(z) y_1^i y_2^j$ in (\ref{Hdiffz}) there is one function $\beta_{kl}$ to compensate for. However, we will see that not all the functions $\beta_{kl}$ can be used. The other essential ingredient is the existence of the formal series solutions (\ref{formalseries}), which will ensure that the terms of power $(z-z_0)^{-1}$ vanish identically. We will now show formally that $W$ is bounded.

\begin{lemma}
\label{Wbounded}
The coefficients $\beta_{kl}(z)$, $(k,l) \in J$, in (\ref{Wdef}) can be chosen such that the function $W$ is bounded on the curve $\tilde{\gamma}$.
\end{lemma}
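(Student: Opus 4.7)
The plan is to compute $W'$ along a solution of (\ref{hamsystem}) and to choose the $\beta_{kl}(z)$ recursively so that every unbounded monomial on the right-hand side cancels, leaving an equation $W' = PW + Q + R'$ with $P, Q, R$ bounded on $\tilde{\gamma}$; Lemma~\ref{firstorderlinear} then yields that $W$ is bounded. First I introduce a weight grading $w(y_1^iy_2^j) := i(N+1) + j(M+1)$ and $w(y_2^k/y_1^l) := k(M+1) - l(N+1)$. Lemma~\ref{boundedpowers} says precisely that a rational monomial $y_2^k/y_1^l$ with $(k,l)\in J$ is bounded on $\tilde{\gamma}$ iff $w\le 0$, and in the formal series (\ref{formalseries}) any monomial of weight $w$ behaves like $(z-z_\infty)^{-w/(MN-1)}$, so differentiation along a solution raises $w$ by $MN-1$ on leading terms. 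Thus the positive-weight monomials in $W'$ are precisely what must be cancelled.

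Using $dH/dz = \partial H/\partial z$ along a solution together with (\ref{hamsystem}), one finds
\begin{equation*}
W' = \sum_{(i,j)\in I'}\alpha_{ij}'(z)y_1^iy_2^j + \sum_{(k,l)\in J}\beta_{kl}'(z)\frac{y_2^k}{y_1^l} + \sum_{(k,l)\in J}\beta_{kl}(z)\Bigl[-k(M+1)y_1^{M-l}y_2^{k-1} - l(N+1)\frac{y_2^{N+k}}{y_1^{l+1}}\Bigr] + E,
\end{equation*}
where $E$ collects lower-weight corrections from the non-leading terms of $y_1'$, $y_2'$. The bijection $(k,l)=(j+1,M-i)$ between $J$ and $I\setminus\{(0,0)\}$ pairs each term $\alpha_{ij}'(z)y_1^iy_2^j$ of $H'$ with a unique leading monomial produced in the third sum, at the same weight. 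Processing the positive-weight monomials in descending order of weight, the cancellation requirement at each step reduces to a linear equation $\lambda_{kl}(z)\beta_{kl}(z)=\Phi_{kl}(z)$, where $\Phi_{kl}$ is built from the $\alpha_{ij}'(z)$ and from the previously fixed $\beta_{k'l'}$ of higher weight. Away from the resonance weight $w=MN-1$ (the weight corresponding to the $(z-z_\infty)^{-1}$ terms singled out in the discussion preceding the lemma), the scalar $\lambda_{kl}$ is nonzero and $\beta_{kl}(z)$ is uniquely determined analytically.

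The central point, and the main obstacle, is the resonance at $w=MN-1$: here $\lambda_{kl}$ vanishes, so solvability demands $\Phi_{kl}(z)\equiv 0$, which is precisely the compatibility condition placed on the $\alpha_{ij}(z)$ by the existence of the formal algebraic series (\ref{formalseries}) for every admissible pair $(c_{1,-N-1},c_{2,-M-1})$, i.e.\ the hypothesis of Theorem~\ref{maintheo}. With that hypothesis in force, the resonant $\beta_{kl}$ may be chosen freely (say zero) and the recursion continues to completion. After the construction every positive-weight monomial in $W'$ has been cancelled, and the remainder is a sum of monomials whose $y$-content has weight $\le 0$, and hence is bounded on $\tilde{\gamma}$ by Lemma~\ref{boundedpowers}. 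Gathering those residues that naturally express themselves as multiples of the leading part $y_1^{M+1}+y_2^{N+1}$ of $W$ into the coefficient $P$, bounded standalone contributions into $Q$, and those pieces most naturally written as total derivatives (arising from the $\alpha_{ij}'(z)$-factors by integration by parts) into $R'$, one arrives at $W' = PW + Q + R'$ with $P, Q, R$ bounded on $\tilde{\gamma}$. Lemma~\ref{firstorderlinear} then yields the assertion.
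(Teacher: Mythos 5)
Your overall strategy is the paper's, but the step where you declare that ``every positive-weight monomial in $W'$ has been cancelled'' and the rest is bounded by Lemma~\ref{boundedpowers} is where the argument genuinely breaks. The functions $\beta_{kl}$ can only be used to remove the polynomial monomials $y_1^iy_2^j$, $(i,j)\in I$, via the pairing $(k,l)=(j+1,M-i)$; but differentiating (\ref{Wdef}) also produces \emph{rational} monomials of positive weight that no $\beta$ reaches: the terms $l(N+1)\beta_{kl}\,y_2^{N+k}/y_1^{l+1}$ and the cross terms $(ik+jl)\alpha_{ij}\beta_{kl}\,y_1^{i-l-1}y_2^{k+j-1}$ (and, after the first substitution, $\beta_{kl}\beta_{k'l'}$-terms) can have $k'(M+1)-l'(N+1)>0$, so they are neither bounded nor cancellable in your scheme. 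The paper handles them by first replacing $y_2^{N+1}$ through (\ref{Wdef}) itself --- note that it is exactly this substitution which makes the effective coefficient of $\beta_{j+1,M-i}$ equal to $MN-1-i(N+1)-j(M+1)$ as in (\ref{gammaij}); in your displayed formula the coefficient is $-(j+1)(M+1)$, which never vanishes, so your recursion as written exhibits no resonance at all --- and then by the iterative identity (\ref{ratioterms})--(\ref{ratiosolved}), which trades each positive-weight ratio $y_2^{j_0}/y_1^{i_0}$ for bounded terms, a bounded multiple of $W$, an exact derivative (this, not the $\alpha_{ij}'$ factors, is where $R'$ comes from), and ratios of strictly smaller weight, terminating after finitely many steps. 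Without this reduction the remainder of $W'$ is not bounded and Lemma~\ref{firstorderlinear} does not apply.

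The resonance step is also only asserted, not proved. There are $d$ resonant coefficients: $\gamma_{-1,N}$, attached to the ratio $y_2^N/y_1$ --- which in general is not even indexed by $J$ (for $M=N=3$ one has $(N,1)\notin J$), so it lies outside your $\lambda_{kl}\beta_{kl}=\Phi_{kl}$ picture --- together with $\gamma_{m-1,N-n},\dots,\gamma_{M-m,n-1}$. The hypothesis of Theorem~\ref{maintheo} does not literally state that these vanish; the paper derives it by inserting the formal series (\ref{formalseries}) into $W$, observing that $W$ then has a pure Laurent expansion in $(z-z_0)^{1/(MN-1)}$ so the coefficient of $(z-z_0)^{-1}$ in $W'$ must vanish (no logarithms can occur), and then using \emph{all} $d$ admissible leading pairs $(c_{1,-N-1},c_{2,-M-1})$ to obtain a Vandermonde system in the $d$ roots of $\omega^d=-1$, forcing each resonant $\gamma$ to vanish identically on a neighbourhood. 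Your phrase ``solvability demands $\Phi_{kl}\equiv 0$, which is precisely the compatibility condition'' hides exactly this argument: when $d>1$ a single formal series yields only one linear relation among the $d$ obstructions, and the passage from ``formal series exist'' to ``the specific residue-type coefficients in $W'$ vanish'' needs the no-logarithm mechanism spelled out above.
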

\begin{proof}
Taking the total $z$-derivative of (\ref{Wdef}) one obtains
\begin{equation}
\label{Wdiffz}
\begin{aligned}
W' =& \sum_{(i,j) \in I'} \alpha_{ij}' y_1^i y_2^j + \sum_{(k,l) \in J} \left( \beta_{kl}' \frac{y_2^k}{y_1^l} + k \beta_{kl} \frac{y_2^{k-1} y_2'}{y_1^l} - l \beta_{kl} \frac{y_2^k y_1'}{y_1^{l+1}} \right) \\
   =& \sum_{(i,j) \in I'} \alpha_{ij}' y_1^i y_2^j - \sum_{(i,j) \in I'} \sum_{(k,l) \in J} (ik+jl) \alpha_{ij} \beta_{kl} y_1^{i-l-1} y_2^{k+j-1} \\ & + \sum_{(k,l) \in J} \left( \beta_{kl}' \frac{y_2^k}{y_1^l} - k (M+1) \beta_{kl} y_1^{M-l} y_2^{k-1} - l (N+1) \beta_{kl} \frac{y_2^{N+k}}{y_1^{l+1}} \right) \\ 
   =& \sum_{(i,j) \in I'} \alpha_{ij}' y_1^i y_2^j + \sum_{(k,l) \in J} (l(N+1) - k(M+1)) \beta_{kl}y_1^{M-l}y_2^{k-1} \\ & + \sum_{(k,l) \in J} \left( \beta_{kl}' \frac{y_2^k}{y_1^l} - l(N+1)\beta_{kl}\frac{y_2^{k-1}}{y_1^{l+1}}W \right) \\ & + \sum_{(i,j) \in I'} \sum_{(k,l) \in J} (l(N-j+1)-ik) \alpha_{ij} \beta_{kl} y_1^{i-l-1} y_2^{k+j-1} \\ & + \sum_{(k,l) \in J} \sum_{(k',l') \in J} l(N+1) \beta_{kl} \beta_{k'l'} \frac{y_2^{k+k'-1}}{y_1^{l+l'+1}},
\end{aligned}
\end{equation}
where we have used (\ref{Wdef}). All terms in (\ref{Wdiffz}) are now either of the form $y_1^{i_0} y_2^{j_0}$ with $(i_0,j_0) \in I$, or of the form $\frac{y_2^{j_0}}{y_1^{i_0}}$ with $i_0 \geq 1$ and $j_0(M+1)-i_0(N+1)<(M+1)(N+1)$. Note also that for the coefficients $\frac{y_2^{k-1}}{y_1^{l+1}}$ of $W$, $(k,l) \in J$, we have $(l+1)(N+1) - (k-1)(M+1) \geq 0$, i.e. by Lemma \ref{boundedpowers} these are bounded on $\tilde{\gamma}$. By repeating the process of replacing powers $y_2^{N+1}$ using (\ref{Wdef}) one can achieve in a finite number of steps that the terms of the form $\frac{y_2^{j_0}}{y_1^{i_0}}$ either have $j_0 \geq N+1$ with $i_0(N+1) - j_0(M+1) \geq 0$ and are therefore bounded by Lemma \ref{boundedpowers}, or have $j_0 \leq N$ and $j_0(M+1)-i_0(N+1) \leq MN-1$, equality holding if and only if $(i_0,j_0) = (1,N)$. We now manipulate the terms of the form $\frac{y_2^{j_0}}{y_1^{i_0}}$, $j_0 \leq N$, in the following way
\begin{equation}
\label{ratioterms}
\begin{aligned}
(M+&1)(j_0+1) \frac{y_2^{j_0}}{y_1^{i_0}} \\
=& -(j_0+1)\frac{y_2' y_2^{j_0}}{y_1^{M+i_0}} - \sum_{(i,j) \in I'} i(j_0+1) \alpha_{ij} \frac{y_2^{j+j_0}}{y_1^{M-i+i_0+1}} \\
=& -\left( \frac{y_2^{j_0+1}}{y_1^{M+i_0}} \right)' - (M+i_0) \frac{y_2^{j_0+1}y_1'}{y_1^{M+i_0+1}} - \sum_{(i,j) \in I'} i(j_0+1) \alpha_{ij} \frac{y_2^{j+j_0}}{y_1^{M-i+i_0+1}} \\
=& -\left( \frac{y_2^{j_0+1}}{y_1^{M+i_0}} \right)' - (N+1)(M+i_0) \frac{y_2^{N+j_0+1}}{y_1^{M+i_0+1}} \\ & - \sum_{(i,j) \in I'} \left(i(j_0+1) + j(M+i_0)\right) \alpha_{ij} \frac{y_2^{j+j_0}}{y_1^{M-i+i_0+1}} \\
=& -\left( \frac{y_2^{j_0+1}}{y_1^{M+i_0}} \right)' - (N+1)(M+i_0) \frac{y_2^{j_0}}{y_1^{M+i_0+1}}W  \\ & + \sum_{(i,j) \in I'} \left( (N+1)(M+i_0) - j(M+i_0) - i(j_0+1) \right) \alpha_{ij} \frac{y_2^{j+j_0}}{y_1^{M-i+i_0+1}} \\ & + \sum_{(k,l) \in J} (N+1)(M+i_0) \beta_{kl} \frac{y_2^{k+j_0}}{y_1^{M+l+i_0+1}} + (N+1)(M+i_0) \frac{y_2^{j_0}}{y_1^{i_0}}.
\end{aligned}
\end{equation}
Thus, unless $j_0(M+1)-i_0(N+1) = MN-1$, one can solve (\ref{ratioterms}) for $\frac{y_2^{j_0}}{y_1^{i_0}}$:
\begin{equation}
\label{ratiosolved}
\begin{aligned}
\frac{y_2^{j_0}}{y_1^{i_0}} =& \frac{1}{MN-1+i_0(N+1)-j_0(M+1)} \left( (N+1)(M+i_0) \frac{y_2^{j_0}}{y_1^{M+i_0+1}} W \right. \\ & \left. + \sum_{(i,j) \in I'} \left( i(j_0+1) + j(M+i_0) - (N+1)(M+i_0) \right) \alpha_{ij} \frac{y_2^{j+j_0}}{y_1^{M-i+i_0+1}} \right. \\ & \left. - \sum_{(k,l) \in J} (N+1)(M+i_0) \beta_{kl} \frac{y_2^{k+j_0}}{y_1^{M+l+i_0+1}} + \left( \frac{y_2^{j_0+1}}{y_1^{M+i_0}} \right)' \right).
\end{aligned}
\end{equation}
Again, in (\ref{ratiosolved}) the coefficient $\frac{y_2^{j_0}}{y_1^{M+i_0+1}}$ of $W$ is bounded by Lemma \ref{boundedpowers} since we have $(M+i_0+1)(N+1) - j_0(M+1) > 0$. Also, the term $\frac{y_2^{j_0+1}}{y_1^{M+i_0}}$ is bounded by Lemma \ref{boundedpowers} since $(M+i_0)(N+1) - (j_0+1)(M+1) > 0$. Therefore, the term $\left( \frac{y_2^{j_0+1}}{y_1^{M+i_0}} \right)'$ is bounded when integrated over the finite length curve $\tilde{\gamma}$. For the terms of type $\frac{y_2^{k+j_0}}{y_1^{M+l+i_0+1}}$, $(k,l) \in J$, we find
\begin{equation*}
(M+l+i_0+1)(N+1) - (k+j_0)(M+1) \geq 0,
\end{equation*}
which are therefore all bounded, and for the terms $\frac{y_2^{j+j_0}}{y_1^{M-i+i_0+1}}$, $(i,j) \in I'$,
\begin{equation*}
(j+j_0)(M+1) - (M-i+i_0+1)(N+1) < j_0(M+1) - i_0(N+1).
\end{equation*}
We can thus replace $\frac{y_2^{j_0}}{y_1^{i_0}}$ by terms which are bounded or proportional to $W$ with bounded factor, and a sum of terms of the form $\frac{y_2^{j_1}}{y_1^{i_1}}$ with $j_1=j+j_0$, $i_1=M-i+i_0+1$, such that the quantity $j_1(M+1) - i_1(N+1)$ is strictly decreasing. Performing this process iteratively a finite number of times we eventually end up only with terms $\frac{y_2^{j_n}}{y_1^{i_n}}$ for which $j_n(M+1) - i_n(N+1) \leq 0$, Lemma \ref{boundedpowers} showing that they are bounded on $\tilde{\gamma}$.

We thus arrive at a first-order differential equation for $W$ of the form
\begin{equation*}
\begin{aligned}
W' =& P(z,y_1^{-1},y_2) W + \sum_{(i,j) \in I} \gamma_{ij}(z) y_1^i y_2^j + \gamma_{-1N}(z) \frac{y_2^N}{y_1} \\ & + Q(z,y_1^{-1},y_2) + \frac{d}{dz} R(z,y_1^{-1},y_2),
\end{aligned}
\end{equation*} 
where $P$, $Q$ and $R$ are polynomial in their last two arguments and for each monomial $\frac{y_2^k}{y_1^l}$ we have $l(N+1)-k(M+1) \geq 0$, i.e. they are bounded on $\tilde{\gamma}$. We will now show that, by a suitable choice of the $\beta_{kl}$ and the existence of the formal series solutions (\ref{formalseries}), all the coefficients $\gamma_{ij}$, $(i,j) \in I$, as well as $\gamma_{-1N}$, are identically $0$.

We determine the functions $\beta_{kl}=\beta_{j+1,M-i}$ recursively starting with the pairs $(i,j) \in I$ for which the quantity $i(N+1)+j(M+1)$ is maximal. From (\ref{Wdiffz}) we see that 
\begin{equation}
\label{gammaij}
\gamma_{ij}(z) = \alpha_{ij}'(z) + (MN -1 -i(N+1)-j(M+1))\beta_{j+1,M-i}(z) + \cdots,
\end{equation}
where the dots stand for expressions involving only terms $\beta_{k'l'}=\beta_{j'+1,M-i'}$ for which $i'(N+1)+j'(M+1)$ is strictly greater than $i(N+1)+j(M+1)$. We can thus determine $\beta_{kl} = \beta_{j+1,M-i}$ for all pairs $(i,j) \in I$ for which $i(N+1)+j(M+1)>MN-1$. However, when $i(N+1)+j(M+1)=MN-1$, the coeffcient of $\beta_{j+1,M-i}$ in (\ref{gammaij}) vanishes. We now make use of the existence of the formal series solutions (\ref{formalseries}) to show that also $\gamma_{ij} \equiv 0$ in this case.  

Let $n=\frac{N+1}{d}$ and $m=\frac{M+1}{d}$ where $d=\gcd\{M+1,N+1\}$. Consider the $d$ terms $\gamma_{-1,N}(z) \frac{y_2^N}{y_1}$, $\gamma_{m-1,N-n}(z) y_1^{m-1}y_2^{N-n},\dots,\gamma_{M-m,n-1}(z) y_1^{M-m}y_2^{n-1}$. When one inserts the formal series solutions (\ref{formalseries}) into these expressions they have leading order $(z-z_0)^{-1}$. But, as explained in Remark \ref{remark1}, there are essentially $d$ formal series solutions corresponding to the different choices of the leading coefficients $c_{1,-N-1},c_{2,-M-1}$ such that $c_{1,-N-1}^{MN-1} =-\frac{1}{(MN-1)^{N+1}}$. Inserting any of the series into (\ref{Wdef}) shows that $W$ has a Laurent series expansion in powers of $(z-z_0)^{1/(MN-1)}$. Therefore, the coefficient of $(z-z_0)^{-1}$ in $W'$ vanishes since otherwise $W$ would have logarithmic terms in its expansion. The coefficients of $(z-z_0)^{-1}$ in $W'$, for the different choices of $(c_{1,-N-1},c_{2,-M-1})$, are
\begin{equation*}
\begin{aligned}
\frac{-1}{MN-1} \left( \gamma_{-1,N}(z_0) + \omega_1 \gamma_{m-1,N-n}(z_0) + \cdots + \omega_1^{d-1} \gamma_{M-m,n-1}(z_0) \right) =& 0 \\
\frac{-1}{MN-1} \left( \gamma_{-1,N}(z_0) + \omega_2 \gamma_{m-1,N-n}(z_0) + \cdots + \omega_2^{d-1} \gamma_{M-m,n-1}(z_0) \right) =& 0 \\
\vdots \\
\frac{-1}{MN-1} \left( \gamma_{-1,N}(z_0) + \omega_d \gamma_{m-1,N-n}(z_0) + \cdots + \omega_d^{d-1} \gamma_{M-m,n-1}(z_0) \right) =& 0,
\end{aligned}
\end{equation*}
where $\omega_i$, $i=1,\dots,d$, are the $d$ distinct roots of $\omega^d = -1$. This system of $d$ equations shows
\begin{equation*}
 \gamma_{-1,N}(z_0) = \gamma_{m-1,N-n}(z_0) = \cdots = \gamma_{M-m,n-1}(z_0) = 0.
\end{equation*}
However, the formal series expansions exist for all $\hat{z}$ in a neighbourhood of $z_0$. Therefore we have shown in fact that
\begin{equation*}
 \gamma_{-1,N} = \gamma_{m-1,N-n} = \cdots = \gamma_{M-m,n-1} \equiv 0.
\end{equation*}
The functions $\beta_{j+1,M-i}$ with $i(N+1)+j(M+1)=MN-1$ can be chosen arbitrarily and will henceforth be set to $0$. The remaining functions $\beta_{j+1,M-i}$ with $i(N+1)+j(M+1)<MN-1$ can now all be determined recursively, so that $\gamma_{ij} \equiv 0$ for all $(i,j) \in I \cup \{(-1,N)\}$. We have thus arrived at a first-order linear differential equation for $W$ of the form
\begin{equation}
\label{finalWeq}
W' = P(z,y_1^{-1},y_2) W + Q(z,y_1^{-1},y_2) + R'(z,y_1^{-1},y_2),
\end{equation}
where $P$, $Q$ and $R$ are bounded on $\tilde{\gamma}$ near a movable singularity $z_0$ of a solution $(y_1(z),y_2(z))$. Lemma \ref{firstorderlinear} now shows that $W$ is bounded on $\tilde{\gamma}$. 
\end{proof}

\section{A regular initial value problem}
To show that a movable singularity is an algebraic branch point we will now introduce coordinates $u$ and $v$ for which there exists a regular initial value problem. The coordinate $u$ is defined by 
\begin{equation}
\label{udef}
y_1 = u^{-\frac{N+1}{d}},
\end{equation}
where a choice of branch is made. We also define
\begin{equation}
\label{wdef}
w=y_2 u^{\frac{M+1}{d}}.
\end{equation}
From (\ref{Wdef}) one obtains an algebraic equation for $w$,
\begin{equation}
\label{wequation}
\begin{aligned}
0 =& w^{N+1} + \sum_{(i,j) \in I'} \alpha_{ij}(z) u^{\frac{(M+1)(N+1)-i(N+1)-j(M+1)}{d}} w^j \\ & + \sum_{(k,l) \in J} \beta_{kl}(z)u^{\frac{(M+1)(N+1)+l(N+1)-k(M+1)}{d}} w^k + 1 - Wu^{\frac{(M+1)(N+1)}{d}},
\end{aligned}
\end{equation}
all the exponents of $u$ being positive integers. The solutions of this equation for $w$ will be denoted by $w_1,\dots,w_{N+1}$. They are analytic functions of $u$, $z$ and $W$ in some neighbourhood of $u=0$, $z=z_\infty$ and $W=W_0$ for any $W_0 \in \mathbf{C}$. We express the $w_n$ as power series in $u$ and $W$ with analytic coefficients in $z$,
\begin{equation*}
w_n = F_n(z,u,W) = \omega_n \sum_{j,k=0}^\infty a_{jkn}(z) u^j W^k,
\end{equation*}
where $\omega_n$, $n=1,\dots,N+1$, are the distinct roots of $\omega^{N+1} = -1$, $a_{00n} \equiv 1$, and the first monomial containing $W$ is of the form $- \frac{1}{N+1}u^{\frac{(M+1)(N+1)}{d}}W$. We denote $\bar{F}_n(z,u) = \sum_{j=0}^{\frac{(M+1)(N+1)}{d}} a_{j0n}(z) u^j$ and define functions $v_n$ by 
\begin{equation}
\label{vdefined}
w_n = \omega_n \left( \bar{F}_n(z,u) - \frac{1}{N+1} u^{\frac{(M+1)(N+1)}{d}}v_n \right),
\end{equation}
so that in the limit $u \to 0$, $v_n$ agrees to leading order with $W$. From the definiton (\ref{wdef}) of $w$ we see that the choice of branch for $\omega_n$ can partially be absorbed into the original choice of branch for $u$ if $1 < d < M+1$, and completely be absorbed if $d=1$, so that there are essentially only $d$ inequivalent choices for $(u,v_n)$. From (\ref{udef}) and (\ref{hamsystem}) we obtain the differential equation satisfied by $u$:
\begin{equation}
\label{uequation}
\begin{aligned}
u' =& -\frac{d}{N+1} u^{\frac{N+1}{d}+1} \Bigg[ (N+1) \omega_n^N \left(u^{-\frac{M+1}{d}} \bar{F}_n(z,u) - \frac{1}{N+1} u^{\frac{(M+1)N}{d}}v_n \right)^N \\ & + \sum_{(i,j) \in I'} j\alpha_{ij}(z)u^{-i\frac{N+1}{d}} \omega_n^{j-1} \left(u^{-\frac{M+1}{d}} \bar{F}_n(z,u) - \frac{1}{N+1} u^{\frac{(M+1)N}{d}}v_n \right)^{j-1} \Bigg].
\end{aligned}
\end{equation}
Taking the reciprocal of (\ref{uequation}) and changing the role of the dependent and independent variables $u$ and $z$ one obtains, extracting the highest power of $u$ on the right hand side, an equation of the form,
\begin{equation}
\label{zequation}
\frac{dz}{du} = u^{\frac{MN-1}{d}-1} A(u,z,v),
\end{equation}
where $A(u,z,v)$ is analytic in $(u,z,v)$ at $(0,z_\infty,v_0)$ for any $v_0 \in \mathbf{C}$, and $A(0,z_\infty,v_0) = \frac{\omega_n}{d}$. We drop the index $n$ from now on. Reinserting (\ref{vdefined}) into (\ref{wequation}) yields an expression for $W$ in terms of $u$ and $v$ of the form
\begin{equation}
\label{Wexpressedbyv}
W = v + G(z,u,v),
\end{equation} 
where $G$ is a polynomial in $v$ of degree $N+1$ and analyic in $z$ and $u$ near $u=0$, satisfying $G(z,0,v) = 0$. We differentiate (\ref{Wexpressedbyv}) with respect to $z$,
\begin{equation}
\begin{aligned}
\label{Wprime}
W' =& v' + G_z + G_u u' + G_v v',
\end{aligned}
\end{equation}
and compare this with equation (\ref{finalWeq}), which can be written in the form
\begin{equation}
\label{firstorderWuv}
\begin{aligned}
W' =& \tilde{P}(z,u,v) W + \tilde{Q}(z,u,v) + \frac{d}{dz} \tilde{R}(z,u,v) \\
=& \tilde{P} (v + G) + \tilde{Q} + \tilde{R}_z + \tilde{R}_u u' + \tilde{R}_v v',
\end{aligned}
\end{equation}
where $\tilde{P}$, $\tilde{Q}$ and $\tilde{R}$ are polynomial in $u$ and $v$. One can solve (\ref{Wprime}) and (\ref{firstorderWuv}) for $v'$ to obtain an equation of the form
\begin{equation}
\label{vequation}
v' = B(z,u,v) u' + C(z,u,v),
\end{equation}
where $B$ and $C$ are analytic in their arguments. Multiplying (\ref{vequation}) by (\ref{zequation}) one obtains and equation for $v$ as function of $u$:
\begin{equation}
\label{vuequation}
\frac{dv}{du} = \frac{dv}{dz} \frac{dz}{du} = B(z,u,v) + u^{\frac{MN-1}{d}-1}A(z,u,v)C(z,u,v).
\end{equation}
Equations (\ref{zequation}) and (\ref{vuequation}) together form a regular initial value problem for $z$ and $v$ as functions of $u$ near $u=0$ with $z(0)=z_\infty$ and $v(0)=v_0$.

\section{Proof of Theorem 1}
We can now complete the proof of Theorem \ref{maintheo}.
\begin{proof}
By Lemma \ref{Wbounded}, after a possible modification of $\gamma$, the auxiliary function $W$ is bounded along $\gamma$. Consider a sequence $(z_n) \subset \gamma$ such that $z_n \to z_\infty$ as $n \to \infty$. Suppose that the sequence $(y_1(z_n))$ is bounded. Then the functional form of $W(z,y_1,y_2)$ implies that the sequence $(y_2(z_n))$ is also bounded. However, Lemma \ref{painleve} now implies that the solution $(y_1,y_2)$ can be analytically continued to $z_\infty$, in contradiction to the assumption in the theorem. Therefore, the sequence $(y_1(z_n))$ must tend to infinity since otherwise it would have a bounded subsequence. In the coordinates $u,v$ introduced in the previous section we therefore have that $u(z_n) \to 0$ and $v(z_n)$ is bounded. Hence there exists some subsequence $(z_{n_k})$ such that $v(z_{n_k}) \to v_0$ for some $v_0 \in \mathbf{C}$. Equations (\ref{zequation}) and (\ref{vuequation}) now form a regular initial value problem for $z$ and $v$ as functions of $u$ with initial values $z_\infty$ and $v_0$ at $u=0$. Lemma \ref{painleve} then shows that $z$ and $v$ are analytic at $u=0$. Since $A(0,z_\infty,v_0) \neq 0$ in (\ref{zequation}), $z$ has a convergent power series expansion of the form
\begin{equation*}
z = z_\infty + \sum_{k=0}^\infty \xi_k u^{k+\frac{MN-1}{d}},
\end{equation*}
in a neighbourhood of $u=0$. Taking the $\frac{MN-1}{d}$-th root,
\begin{equation*}
(z-z_\infty)^{\frac{d}{MN-1}} = \sum_{k=1}^\infty \eta_k u^k,
\end{equation*}
and inverting the power series, one shows that $u$ has a convergent series expansion
\begin{equation*}
u = \sum_{k=1}^\infty \zeta_k (z-z_\infty)^{\frac{kd}{MN-1}}.
\end{equation*}
By the definition (\ref{udef}) of $u$, one obtains a series expansion for $y_1$,
\begin{equation*}
y_1(z) = \sum_{k=-\frac{N+1}{d}}^\infty C_{1,k} (z-z_\infty)^{\frac{kd}{MN-1}},
\end{equation*}
convergent in a branched, punctured neighbourhood of $z_\infty$. Also, from the definition (\ref{wdef}) we find, since $w \neq 0$ at $z=z_\infty$,
\begin{equation*}
y_2(z) = \sum_{k=-\frac{M+1}{d}}^\infty C_{2,k} (z-z_\infty)^{\frac{kd}{MN-1}}.
\end{equation*}
\end{proof}

\section{Lowest degree examples}
If $M=1$ the Hamiltonian (\ref{hamiltonian}) can essentially be reduced to the form
\begin{equation*}
H(z,y_1,y_2) = \frac{1}{2} y_1^2 + P(z,y_2).
\end{equation*}
The Hamiltonian system thus corresponds to the second-order differential equation $y'' = P_y(z,y)$, the case of which was treated in \cite{halburd1}. This case includes the Painlev\'e equations $P_I$ (for $N=2$) and $P_{II}$ (for $N=3$). For $N \geq 4$ the equation has genuinely branched solutions. Let us now consider Hamiltonian systems where both $M,N \geq 2$.
\subsection{Case $M=N=2$}
The Hamiltonian here is of the form 
\begin{equation*}
H(z,y_1,y_2) = \frac{1}{3} y_1^3 + \frac{1}{3} y_2^3 + \alpha(z) y_1 y_2 + \beta(z) y_1 + \gamma(z) y_2,
\end{equation*}
where we have chosen a slightly different normalisation than the one in (\ref{hamiltonian}). The resonance conditions in this case are $\alpha'' \equiv 0$, $\beta' \equiv 0$ and $\gamma' \equiv 0$. One is therefore essentially left with
\begin{equation*}
H(z,y_1,y_2) = \frac{1}{3} y_1^3 + \frac{1}{3} y_2^3 + z y_1 y_2 + \beta y_1 + \gamma y_2,
\end{equation*}
the corresponding system of differential equations being
\begin{equation}
\label{painlevesystem}
\begin{aligned}
y_1' = & y_2^2 + z y_1 + \gamma \\
y_2' = & -y_1^2 - z y_2 - \beta.
\end{aligned}
\end{equation}
About any movable singularity $z_\infty$ a solution is represented by 
\begin{equation*}
y_1(z) = \sum_{k=-1}^\infty C_{1,k} (z-z_\infty)^k, \quad y_2(z) = \sum_{k=-1}^\infty C_{2,k} (z-z_\infty)^k,
\end{equation*}
with $C_{1,-1}^3 = -1$ and $C_{2,-1} = C_{2,-1}^2$, i.e. there are three possible leading order behaviours about any movable singularity which in this case are simple poles. Theorem \ref{maintheo} in this case states that every local solution $(y_1,y_2)$ extends to meromorphic functions in the whole complex plane, i.e. the system (\ref{painlevesystem}) has the Painlev\'e property. It is therefore of interest how its solutions can be expressed in terms of the six Painlev\'e transcendents. Therefore we let $y=y_1$ and eliminate $y_2$ from (\ref{painlevesystem}). This yields the following scalar differential equation of second order and second degree in $y$,
\begin{equation}
\label{scalareqn}
\left( y'' + zy' - (1-2z^2)y - 2\gamma z \right)^2 = 4\left(y^2+\beta\right)^2\left(y'-zy-\gamma\right).
\end{equation}
A Painlev\'e type classification for equations of second order and second degree has been done by C. Cosgrove and G. Scoufis in \cite{cosgrove1}. They found six inequivalent types of equations in the class $(y'')^2=F(z,y,y')$ which they denoted by SD-I -- SD-VI. All of these can be solved in terms of the Painlev\'e transcendents $P_I$ -- $P_{VI}$. In fact, equation (\ref{scalareqn}) is of the modified form denoted by SD-IV'.A (equation $5.87$ in \cite{cosgrove1}), which is solved in terms of $P_{IV}$.

\subsection{Case $M=2$, $N=3$}
In this case the normalised Hamiltonian (\ref{hamiltonian}) is
\begin{equation*}
H = y_1^3 + y_2^4 + \alpha_{21} y_1^2y_2 + \alpha_{12} y_1y_2^2 + \alpha_{11} y_1y_2 + \alpha_{20} y_1^2 + \alpha_{02} y_2^2 + \alpha_{10} y_1 + \alpha_{01} y_2.
\end{equation*}
The only resonance condition is
\begin{equation*}
\left( 3 \alpha_{12} - \alpha_{21}^2 \right)'' = 0,
\end{equation*}
and if it satisfied the solutions near a movable singularity $z_\infty$ are given by
\begin{equation*}
y_1(z) = \sum_{k=-4}^\infty C_{1,k} (z-z_\infty)^\frac{k}{5}, \quad y_2(z) = \sum_{k=-3}^\infty C_{2,k} (z-z_\infty)^\frac{k}{5},
\end{equation*}
with $C_{1,-4}^5 = -5^{-4}$, $C_{2,-3} = 5 C_{1,-4}^3$, where the choice for $C_{1,-4}$ can completely be absorbed into the choice of branch for $(z-z_\infty)^{\frac{1}{5}}$.

\subsection{Case $M=N=3$}
The normalised Hamiltonian is given by
\begin{equation*}
H = y_1^4 + y_2^4 + \alpha_{21} y_1^2y_2 + \alpha_{12} y_1y_2^2 + \alpha_{20} y_1^2 + \alpha_{11} y_1y_2 + \alpha_{02} y_2^2 + \alpha_{10} y_1 + \alpha_{01} y_2.
\end{equation*}
In order for the solutions to have only movable algebraic singularities the following conditions need to be satisfied,
\begin{equation*}
\left( 2 \alpha_{20} - \alpha_{12}^2 \right)' = 0, \quad \alpha_{11}' = 0, \quad \left( 2 \alpha_{02} - \alpha_{21}^2 \right)' = 0.
\end{equation*}
The solutions are given by
\begin{equation*}
y_1(z) = \sum_{k=-1}^\infty C_{1,k}(z-z_\infty)^{\frac{k}{2}}, \quad y_2(z) = \sum_{k=-1}^\infty C_{2,k}(z-z_\infty)^{\frac{k}{2}},
\end{equation*}
about any movable singularity $z_\infty$, where $C_{1,-1}^8 = - \frac{1}{16}$, $C_{2,-1} = 2 C_{1,-1}^3$, the choice for $C_{1,-1}$ however can only partially be absorbed into the choice of branch for $(z-z_\infty)^\frac{1}{2}$, i.e. there are $4$ possible leading order behaviours of the solution near any movable singularity.

\section{Summary and Outlook}
\noindent For a class of Hamiltonian systems of ordinary differential equations we have found that the only movable singularities obtained by analytic continuation along finite length curves are algebraic branched points, in particular these singularities are isolated and the solutions are locally finitely branched. The possibility of movable singularities obtained by analytic continuation along an infinite length curve is discussed by R. Smith in \cite{smith} for certain second-order differential equations. There it is shown that a singularity of this type is non-isolated, more specifically it is an accumulation point of algebraic singularities, and they cannot be ruled out at this stage for the systems presented here. It remains an interesting task to classify the structure of movable singularities for wider classes of differential equations. The author would like to express his sincere gratitude to Prof.\ R.\ Halburd for his invaluable support and many interesting discussions.

\bibliographystyle{plain}

\noindent Thomas Kecker \\
Department of Mathematics \\
University College London \\
Gower Street \\
London WC1E 6BT \\
United Kingdom \\
email: tkecker@math.ucl.ac.uk \\

\end{document}